\documentclass[12pt, reqno]{amsart}
\usepackage[margin=1in]{geometry}
\usepackage{amssymb,latexsym,amsmath,amscd,amsfonts}
\usepackage{latexsym}
\usepackage[mathscr]{eucal}
\usepackage{bm}
\usepackage{mathptmx}
\usepackage{amssymb}
\usepackage{amsthm}
\usepackage{dcolumn}
\usepackage[all]{xy}
\usepackage{enumitem}
\usepackage[utf8]{inputenc}

\def \qed {\hfill \vrule height6pt width 6pt depth 0pt}
\def\textmatrix#1&#2\\#3&#4\\{\bigl({#1 \atop #3}\ {#2 \atop #4}\bigr)}
\def\dispmatrix#1&#2\\#3&#4\\{\left({#1 \atop #3}\ {#2 \atop #4}\right)}
\newcommand{\beg}{\begin{equation}}
	\newcommand{\eeg}{\end{equation}}
\newcommand{\ben}{\begin{eqnarray*}}
	\newcommand{\een}{\end{eqnarray*}}

\newtheorem{thm}{Theorem}[section]

\newtheorem{lem}[thm]{Lemma}

\newtheorem{prop}[thm]{Proposition}
\numberwithin{equation}{section} \theoremstyle{definition}
\newtheorem{defn}[thm]{Definition}
\newtheorem{rem}[thm]{Remark}

\newtheorem{eg}[thm]{Example}

\newcommand{\HS}{\mathcal H}
\newcommand{\KS}{\mathcal K}
\newcommand{\C}{\mathbb{C}}
\newcommand{\D}{\mathbb{D}}
\newcommand{\T}{\mathbb{T}}

\newcommand{\Q}{C_{1,r}}
\newcommand{\A}{\mathbb{A}_r}
\newcommand{\ov}{\overline}
\newcommand{\la}{\left\langle}
\newcommand{\ra}{\right\rangle}

\def\textmatrix#1&#2\\#3&#4\\{\bigl({#1 \atop #3}\ {#2 \atop #4}\bigr)}
\def\dispmatrix#1&#2\\#3&#4\\{\left({#1 \atop #3}\ {#2 \atop #4}\right)}

\begin{document}
	\title[On doubly commuting operators in $C_{1, r}$ class and quantum annulus]{On doubly commuting operators in $C_{1, r}$ class and quantum annulus}
	
	\author[Tomar] {NITIN TOMAR}
	
	\address[Nitin Tomar]{Mathematics Department, Indian Institute of Technology Bombay, Powai, Mumbai-400076, India.} \email{tomarnitin414@gmail.com}		
	
	\keywords{$C_{1,r}$ class, quantum annulus, decomposition, dilation}	
	
	\subjclass[2020]{47A15, 47A20, 47A25}	
	
	\thanks{The author is supported by the Prime Minister's Research Fellowship (PMRF ID 1300140), Government of India.}	
	
	\begin{abstract}
		For $ 0 < r < 1 $, let $ \mathbb{A}_r = \{ z \in \mathbb{C} : r < |z| < 1 \} $ be the annulus with boundary $ \partial \overline{\mathbb{A}}_r = \mathbb{T} \cup r\mathbb{T} $, where $ \mathbb{T} $ is the unit circle in the complex plane $\mathbb C$. We study the class of operators 
		\[
		C_{1,r} = \{ T : T \text{ is invertible and } \|T\|, \|rT^{-1}\| \leq 1 \},
		\]
		introduced by Bello and Yakubovich. Any operator $T$ for which the closed annulus $\overline{\mathbb{A}}_r$ is a spectral set is in $C_{1,r}$.  The class $C_{1, r}$ is closely related to the \textit{quantum annulus} which is given by
		\[
		QA_r = \{ T : T \text{ is invertible and } \|rT\|, \|rT^{-1}\| \leq 1 \}.
		\]
		McCullough and Pascoe proved that an operator in $ QA_r $ admits a dilation to an operator $ S $ satisfying $(r^{-2} + r^2)I - S^*S - S^{-1}S^{-*} = 0$. An analogous dilation result holds for operators in $ C_{1,r}$ class. We extend these dilation results to doubly commuting tuples of operators in quantum annulus as well as in $C_{1,r}$ class. We also provide characterizations and decomposition results for such tuples.
	\end{abstract}

	\maketitle
	
	\section{Introduction}
	
	\vspace{0.2cm}
	
	\noindent In this paper, we find characterizations, dilations and decomposition results for an operator tuple consisting of operators from the $C_{1,r}$ class and quantum annulus $QA_r$, where
	\begin{align}
C_{1,r} &=\{T: T \ \mbox{is invertible and} \ \|T\|, \|rT^{-1}\| \leq 1\} \ \ 
\text{and} \label{eqn01} \\ 
QA_r&=\{T :  T \ \text{is invertible and} \ \|rT\|, \|rT^{-1}\| \leq 1 \} \label{eqn02}.	
			\end{align}
We shall follow standard terminologies: every operator is a bounded linear map on a complex Hilbert space and a \textit{contraction} is an operator with norm at most $1$. We denote by $\D , \T , r\T$ the unit disk, the unit circle and the circle with radius $r$ respectively with each of them having center at the origin in the complex plane $\C$. For a Hilbert space $\HS$, $\mathcal B(\HS)$ is the algebra of operators on $\HS$ and $I_\HS$ denotes the identity operator on a Hilbert space $\HS$. For $r \in (0,1)$, we consider the annuli
	\[
	\A=\{ z \in \C \,:\, r<|z|<1 \} \quad \text{and} \quad A_r=\{z \in \C \;:\, r<|z|<r^{-1} \}.
	\]
	In his seminal work, Sarason \cite{Sarason} developed a theory for $H^p$ spaces on an annulus, extending classical results for the disk and studying invariant subspaces for the multiplication operator on $L^2(\partial \A)$. Agler \cite{Agler} profoundly established the success of rational dilation on an annulus. Moreover, since the operators with $\ov{\mathbb A}_r$ as a spectral set belong to $C_{1,r}$ (see \eqref{eqn01}), the class $C_{1,r}$ naturally arises as a broader framework for further study. Beginning with the works of Shields \cite{Shields}, Misra \cite{Misra}, and Douglas and Paulsen \cite{Douglas}, the classes $C_{1,r}$ and $QA_r$ have been extensively studied in the literature with further contributions by Bello and Yakubovich \cite{Dmitry} and Mittal \cite{Mittal}. Later, McCullough, Pascoe and Tsikalas came up with deeper research results on the quantum annulus, e.g. see  \cite{Pas-McCull, TsikalasII}. However, in  \cite{N-S1} these two classes, i.e., the $C_{1,r}$ class and the quantum annulus, were shown to have a one-one correspondence between them in the following way.
	
	\begin{lem}[\cite{N-S1}, Lemma 1.5] \label{lem_basic}
		An operator $T \in C_{1, r}$ if and only if $r^{-1\slash 2}T \in QA_{\sqrt{r}}$. Also, $T \in QA_r$ if and only if $rT \in C_{1, r^2}$.
	\end{lem}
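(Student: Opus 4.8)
The plan is to prove both equivalences by direct substitution into the defining inequalities \eqref{eqn01} and \eqref{eqn02}, using only the fact that multiplying an operator by a nonzero scalar preserves invertibility and scales the norm of the operator and of its inverse in the obvious way.

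First I would treat the statement $T \in C_{1,r} \iff r^{-1/2}T \in QA_{\sqrt r}$. Put $S = r^{-1/2}T$. Since $r \neq 0$, $S$ is invertible precisely when $T$ is, and in that case $S^{-1} = r^{1/2}T^{-1}$. The two conditions defining membership of $S$ in $QA_{\sqrt r}$ are $\|\sqrt r\, S\| \le 1$ and $\|\sqrt r\, S^{-1}\| \le 1$. Now $\sqrt r\, S = \sqrt r \cdot r^{-1/2}T = T$ and $\sqrt r\, S^{-1} = \sqrt r \cdot r^{1/2} T^{-1} = rT^{-1}$, so these two conditions read exactly $\|T\| \le 1$ and $\|rT^{-1}\| \le 1$, which is the definition of $T \in C_{1,r}$. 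This establishes the first equivalence in both directions at once.

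For the second statement $T \in QA_r \iff rT \in C_{1,r^2}$, I would argue identically with $U = rT$. Again $U$ is invertible iff $T$ is, with $U^{-1} = r^{-1}T^{-1}$. Membership of $U$ in $C_{1,r^2}$ means $\|U\| \le 1$ and $\|r^2 U^{-1}\| \le 1$. Here $\|U\| = \|rT\|$ and $r^2 U^{-1} = r^2 \cdot r^{-1}T^{-1} = rT^{-1}$, so the conditions become $\|rT\| \le 1$ and $\|rT^{-1}\| \le 1$, i.e.\ $T \in QA_r$, as claimed.

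There is no real obstacle here: the whole content is the bookkeeping of the scalar factors $r^{\pm 1/2}$ and $r^{\pm 1}$, together with the trivial remark that scaling by a nonzero constant neither creates nor destroys invertibility. The only point worth stating explicitly is that each chain of equalities above is reversible, so each "if" argument immediately yields the "only if" direction as well.
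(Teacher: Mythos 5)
Your argument is correct: both equivalences follow by the scalar substitutions $S=r^{-1/2}T$ and $U=rT$ exactly as you compute, and each step is an equality of the relevant operators, so the reasoning is reversible. The paper itself states this lemma as a citation from \cite{N-S1} without reproducing a proof, and your direct bookkeeping with the defining norm conditions is precisely the intended (and essentially the only) argument.
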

We say that a commuting tuple of invertible operators $(T_1, \dotsc, T_d)$ on a Hilbert space $\HS$ admits a \textit{dilation} to a tuple of  invertible operators $(J_1, \dotsc, J_d)$ acting on a Hilbert space $\KS$, if there is an isometry $V:\HS \to \KS$ such that 
\[
T_1^{n_1}\dotsc T_d^{n_d}=V^*J_1^{n_1}\dotsc J_d^{n_d}V
\] 
for all integers $n_1, \dotsc, n_d$. McCullough and Pascoe \cite{Pas-McCull} proved that an operator in quantum annulus admits a dilation to an operator $J$ on a space $\mathcal{K}$ that satisfies $(r^{-2}+r^2)I_\KS-J^*J-J^{-1}J^{-*}=0$. Using similar techniques as in \cite{Pas-McCull}, we generalize the results of \cite{Pas-McCull} to doubly commuting operators in $QA_r$ and $C_{1,r}$ classes. The following two theorems are the main results of this article. 

\begin{thm}\label{thm_mainII}
	Let $\underline{T}=(T_1, \dotsc, T_d)$ be a doubly commuting tuple of invertible operators acting on a Hilbert space $\HS$. Then the following are equivalent: 
	\begin{enumerate}
		\item $T_1, \dotsc, T_d \in \Q$;
		\item $\underline{T}$ admits a dilation to a doubly commuting tuple $\underline{J}=(J_1, \dotsc, J_d)$ of invertible operators acting on a space $\KS$ such that $(1+r^2)I_\KS-J_m^*J_m-r^2J_m^{-1}J_m^{-*}=0$ for $1\leq m\leq d$. 
	\end{enumerate}
\end{thm}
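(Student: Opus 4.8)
The implication $(2)\Rightarrow(1)$ is the soft direction. Assume a dilation as in (2) exists and fix $m$. Write $A=J_m^*J_m$, a positive invertible operator, and note $J_m^{-1}J_m^{-*}=(J_m^*J_m)^{-1}=A^{-1}$, so the relation $(1+r^2)I_\KS-J_m^*J_m-r^2J_m^{-1}J_m^{-*}=0$ becomes $A+r^2A^{-1}=(1+r^2)I_\KS$; multiplying through by $A$ gives $(A-I_\KS)(A-r^2I_\KS)=0$. Since $A$ is self-adjoint this forces $\sigma(A)\subseteq\{1,r^2\}$, hence $r^2I_\KS\leq A\leq I_\KS$, so $\|J_m\|^2=\|A\|\leq1$ and $\|J_m^{-1}\|^2=\|J_m^{-1}J_m^{-*}\|=\|A^{-1}\|\leq r^{-2}$; thus $J_m\in\Q$. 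Taking $n_m=1$ and all other $n_j=0$ (and then $n_m=-1$) in the dilation identity yields $T_m=V^*J_mV$ and $T_m^{-1}=V^*J_m^{-1}V$, so $\|T_m\|\leq\|J_m\|\leq1$ and $\|rT_m^{-1}\|\leq\|rJ_m^{-1}\|\leq1$; as $T_m$ is invertible by hypothesis, $T_m\in\Q$.

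For $(1)\Rightarrow(2)$ the plan is induction on $d$, dilating one coordinate at a time and carrying the remaining ones along as a commutant. The base case $d=1$ is essentially the theorem of McCullough and Pascoe: for $T\in\Q$, Lemma~\ref{lem_basic} gives $r^{-1/2}T\in QA_{\sqrt{r}}$, which by \cite{Pas-McCull} dilates to an operator $S$ on some $\KS$ with $(r^{-1}+r)I_\KS-S^*S-S^{-1}S^{-*}=0$; then $J:=r^{1/2}S$ dilates $T$ and rescaling the relation gives $(1+r^2)I_\KS-J^*J-r^2J^{-1}J^{-*}=0$. What the induction actually needs is a \emph{commutant} refinement of this: if in addition $\{Y_\alpha\}$ is any family of operators on $\HS$ commuting with $T$ and with $T^*$, then $\KS$ and $J$ can be chosen so that each $Y_\alpha$ admits a dilation $\widehat{Y_\alpha}$ on $\KS$ commuting with $J$ and $J^*$, with $Y\mapsto\widehat Y$ a unital contractive $*$-homomorphism on the $C^*$-algebra generated by $\{Y_\alpha\}$. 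This should follow by running the McCullough--Pascoe construction functorially in $C^*(T)$: the dilation space is built from defect operators such as $(I_\HS-T^*T)^{1/2}$ and $(T^*T-r^2I_\HS)^{1/2}$, each defect space is a module over $\{T,T^*\}'$, and one lifts by $\widehat Y(J^nx)=J^n(Yx)$ on the appropriate dense span, with well-definedness, boundedness and multiplicativity coming from $Y\in\{T,T^*\}'$.

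Granting this, one proves by induction the statement $(P_d)$: every doubly commuting $d$-tuple $(T_1,\dotsc,T_d)$ in $\Q$ on $\HS$, together with any family $\{Y_\alpha\}$ of operators commuting with every $T_m$ and every $T_m^*$, admits a doubly commuting dilation $(J_1,\dotsc,J_d)$ of invertible operators with $(1+r^2)I_\KS-J_m^*J_m-r^2J_m^{-1}J_m^{-*}=0$ for each $m$, plus dilations $\widehat{Y_\alpha}$ forming a unital contractive $*$-homomorphism into $\{J_1,J_1^*,\dotsc,J_d,J_d^*\}'$. Here $P_1$ is the refined base case. For the step, apply $P_1$ to $T_1$ with commutant $\{T_2,\dotsc,T_d\}\cup\{Y_\alpha\}$, obtaining $\KS_1$, the operator $J_1$ satisfying the exact relation, and dilations $T_2^{(1)},\dotsc,T_d^{(1)},\widehat{Y_\alpha}$ on $\KS_1$ all commuting with $J_1,J_1^*$; since $Y\mapsto\widehat Y$ is a unital $*$-homomorphism, the tuple $(T_2^{(1)},\dotsc,T_d^{(1)})$ is again doubly commuting, consists of invertible operators, and lies in $\Q$ (positivity, invertibility and the norm bounds pass through the homomorphism). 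Now apply $P_{d-1}$ to $(T_2^{(1)},\dotsc,T_d^{(1)})$ with commutant $\{J_1\}\cup\{\widehat{Y_\alpha}\}$; since $J_1$ satisfies $(1+r^2)I-J_1^*J_1-r^2J_1^{-1}J_1^{-*}=0$ and $J_1^{-1}$ likewise commutes with every $T_j^{(1)}$ and its adjoint, multiplicativity of the new lift forces the resulting copy $J_1'$ of $J_1$ to satisfy the same exact relation, and $J_1'$ doubly commutes with $J_2,\dotsc,J_d$. Composing the two isometries and the two $*$-homomorphisms gives $P_d$; the empty family then yields (2).

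The main obstacle is the refined base case --- verifying that the McCullough--Pascoe dilation can be carried out functorially in $C^*(T)$ so that the commutant lifts through it as a unital $*$-homomorphism; once that is in hand, all the structural features used in the induction (doubly commuting and $\Q$-membership of the intermediate tuples, and persistence of the exact relation for the already-dilated coordinates) are formal consequences of multiplicativity and adjoint-preservation of the lift. The remaining point needing care is routine: checking that the telescoped isometries $\HS\hookrightarrow\KS_1\hookrightarrow\dotsb\hookrightarrow\KS$ together with the commutation of the various incarnations of the $J_m$ combine to give $T_1^{n_1}\dotsc T_d^{n_d}=V^*J_1^{n_1}\dotsc J_d^{n_d}V$ for \emph{all} integers $n_1,\dotsc,n_d$. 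As an alternative to the explicit construction, one could phrase the base case as a commutant lifting theorem for the operator system underlying $\Q$ and deduce it from a general Arveson-type extension argument.
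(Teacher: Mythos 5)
Your $(2)\Rightarrow(1)$ argument is correct and is essentially the paper's (the paper routes the spectral computation through Proposition \ref{prop_QSA}, but the content is the same). The problem is entirely in $(1)\Rightarrow(2)$, where you have isolated the real difficulty and then assumed it. The ``commutant refinement'' of the one-variable dilation --- that for $T\in\Q$ the McCullough--Pascoe dilation space and operator $J$ can be chosen so that every $Y\in\{T,T^*\}'$ lifts to an operator commuting with $J$ and $J^*$, via a unital $*$-homomorphism --- is not a known result, and your sketch of why it should hold does not match how that dilation is actually built. The construction in \cite{Pas-McCull} is not a defect-space/geometric dilation on which one could lift the commutant by the formula $\widehat Y(J^nx)=J^n(Yx)$; it proceeds by approximating $(T^*T)^{1/2}$ by operators with finite spectrum, realizing those approximants as multiplication operators on $L^2(\T)\otimes\HS$ via an analytic map $\D\to\A$, and then passing to the limit through a completely positive map, Arveson's extension theorem, and Stinespring. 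Nothing in that chain is functorial in $C^*(T)'$: the Stinespring representation is of all of $\mathcal B(\KS)$ restricted to an operator system generated by the $M_j$ alone, and there is no mechanism forcing the commutant to be represented compatibly. Moreover, if your lift really were a unital $*$-homomorphism with $V^*\widehat YV=Y$, the multiplicative-domain argument would force $V\HS$ to reduce every $\widehat Y_\alpha$, so the commutant could only be lifted as $Y\oplus(\ast)$; you would then need $J$ to commute with these specific block-diagonal extensions, which is an extra constraint the one-variable construction was never designed to satisfy. So the base case of your induction is an open assertion, not a routine verification, and the whole inductive scheme rests on it.

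The paper sidesteps this by never dilating one coordinate at a time. Instead it proves the finite-spectrum case for the \emph{whole tuple simultaneously} (Theorem \ref{lem_main}): the commuting spectral projections of $D_1,\dotsc,D_d$ and the unitaries $U_j$ are all carried into a single model $\KS=L^2(\T)\otimes\HS$, where each $M_j$ is multiplication by $F_j(z)=\sum_\alpha v_{j,\alpha}(z)U_jP_{j,\alpha}$; double commutativity of $(M_1,\dotsc,M_d)$ is then immediate from \eqref{eqn_02}, and the exact identity $(1+r^2)I-M_j^*M_j-r^2M_j^{-1}M_j^{-*}=0$ comes from the boundary values $|v_{j,\alpha}|=r$ or $1$. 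The general case follows by approximating each $D_j$ by finite-spectrum operators, taking a direct sum over the approximants, and applying Arveson plus Stinespring to the operator system generated by Laurent polynomials in the whole tuple $\underline M$. If you want to salvage your one-variable-at-a-time strategy, you would have to prove the commutant lifting theorem you postulate; the simultaneous construction is the way the paper avoids needing it.
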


\begin{thm}\label{thm_mainQAr}
	Let $\underline{T}=(T_1, \dotsc, T_d)$ be a doubly commuting tuple of invertible operators acting on a Hilbert space $\HS$. Then the following are equivalent: 
	\begin{enumerate}
		\item $T_1, \dotsc, T_d \in QA_r$;
		\item $\underline{T}$ admits a dilation to a doubly commuting tuple $\underline{J}=(J_1, \dotsc, J_d)$ of invertible operators acting on a space $\KS$ such that $(r^{-2}+r^2)I_{\mathcal K}-J_m^*J_m-J_m^{-1}J_m^{-*}=0$ for $1 \leq m \leq d$.
	\end{enumerate}
\end{thm}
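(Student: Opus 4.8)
The plan is to derive Theorem~\ref{thm_mainQAr} from Theorem~\ref{thm_mainII} by exploiting the scaling $z \mapsto rz$, which by Lemma~\ref{lem_basic} maps $QA_r$ bijectively onto $C_{1,r^2}$. Because Theorem~\ref{thm_mainII} holds for every value of the annulus parameter in $(0,1)$, one may invoke it with $r^2$ in place of $r$ (legitimate since $r^2 \in (0,1)$): \emph{a doubly commuting tuple $(S_1,\dots,S_d)$ of invertible operators satisfies $S_1,\dots,S_d \in C_{1,r^2}$ if and only if it admits a dilation to a doubly commuting tuple $(K_1,\dots,K_d)$ of invertible operators on some Hilbert space $\KS$ with $(1+r^4)I_\KS - K_m^*K_m - r^4 K_m^{-1}K_m^{-*} = 0$ for all $m$.} The whole proof is then a transport of this equivalence along the substitutions $S_m = rT_m$ and $J_m = r^{-1}K_m$, checking only that the defining operator identities and the intertwining relations of the dilations survive the rescaling. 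I shall use repeatedly that multiplication by a nonzero scalar preserves invertibility and the relations $T_iT_j = T_jT_i$, $T_iT_j^* = T_j^*T_i$, so that $(rT_1,\dots,rT_d)$ is a doubly commuting tuple of invertible operators exactly when $(T_1,\dots,T_d)$ is, and likewise for $(r^{-1}K_1,\dots,r^{-1}K_d)$.

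For $(1)\Rightarrow(2)$, starting from $T_1,\dots,T_d \in QA_r$, Lemma~\ref{lem_basic} gives $rT_m \in C_{1,r^2}$ for each $m$, and $(rT_1,\dots,rT_d)$ is a doubly commuting tuple of invertible operators; Theorem~\ref{thm_mainII} at parameter $r^2$ then supplies an isometry $V \colon \HS \to \KS$ and a doubly commuting tuple $(K_1,\dots,K_d)$ of invertible operators on $\KS$ with $(rT_1)^{n_1}\cdots(rT_d)^{n_d} = V^*K_1^{n_1}\cdots K_d^{n_d}V$ for all integers $n_1,\dots,n_d$ and $(1+r^4)I_\KS - K_m^*K_m - r^4 K_m^{-1}K_m^{-*} = 0$ for each $m$. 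I would then set $J_m := r^{-1}K_m$: this is again a doubly commuting tuple of invertible operators, substituting $K_m = rJ_m$ (so that $K_m^*K_m = r^2J_m^*J_m$ and $K_m^{-1}K_m^{-*} = r^{-2}J_m^{-1}J_m^{-*}$) and dividing the displayed equation by $r^2$ turns it into $(r^{-2}+r^2)I_\KS - J_m^*J_m - J_m^{-1}J_m^{-*} = 0$, and since the scalar $r^{n_1+\cdots+n_d}$ cancels from both sides of the intertwining relation one is left with $T_1^{n_1}\cdots T_d^{n_d} = V^*J_1^{n_1}\cdots J_d^{n_d}V$ for all integers $n_j$. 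Thus $(J_1,\dots,J_d)$ is the required dilation.

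For $(2)\Rightarrow(1)$, suppose $\underline{T}$ admits a dilation, via an isometry $V$, to a doubly commuting tuple $(J_1,\dots,J_d)$ of invertible operators with $(r^{-2}+r^2)I_\KS - J_m^*J_m - J_m^{-1}J_m^{-*} = 0$ for each $m$. Setting $K_m := rJ_m$ and reversing the computation of the previous paragraph (multiply the equation by $r^2$) shows that $(1+r^4)I_\KS - K_m^*K_m - r^4 K_m^{-1}K_m^{-*} = 0$, that $(K_1,\dots,K_d)$ is a doubly commuting tuple of invertible operators, and that $(rT_1)^{n_1}\cdots(rT_d)^{n_d} = V^*K_1^{n_1}\cdots K_d^{n_d}V$ for all integers $n_j$; hence $(rT_1,\dots,rT_d)$ admits a dilation to $(K_1,\dots,K_d)$. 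The implication $(2)\Rightarrow(1)$ of Theorem~\ref{thm_mainII} at parameter $r^2$ now forces $rT_m \in C_{1,r^2}$ for every $m$, whence $T_m \in QA_r$ by Lemma~\ref{lem_basic}.

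The genuine difficulty is not located in this argument: here everything reduces to bookkeeping of the scalars $r^{n_1+\cdots+n_d}$ and to the observation that $(r^{-2}+r^2)I - J^*J - J^{-1}J^{-*} = 0$ is exactly the image of $(1+r^4)I - K^*K - r^4 K^{-1}K^{-*} = 0$ under $K = rJ$. All the substantive content has been pushed into Theorem~\ref{thm_mainII}, whose proof --- following the methods of McCullough and Pascoe --- is where the work lies: constructing a single doubly commuting dilation that satisfies the prescribed operator equation in every coordinate simultaneously, presumably by induction on $d$, lifting the new coordinate at each stage compatibly with the previously built ones by means of double commutativity. As a consistency check, the implication $(2)\Rightarrow(1)$ can also be obtained directly, without Theorem~\ref{thm_mainII}: from $(r^{-2}+r^2)I - J_m^*J_m - J_m^{-1}J_m^{-*} = 0$ one gets $(J_m^*J_m - r^2I)(J_m^*J_m - r^{-2}I) = 0$, hence $r^2 I \le J_m^*J_m \le r^{-2}I$, so $\|rJ_m\| \le 1$ and $\|rJ_m^{-1}\| \le 1$, and then $\|rT_m\| = \|V^*(rJ_m)V\| \le 1$ and $\|rT_m^{-1}\| = \|V^*(rJ_m^{-1})V\| \le 1$, giving $T_m \in QA_r$.
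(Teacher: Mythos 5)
Your proposal is correct and follows essentially the same route as the paper: both deduce the theorem from Theorem~\ref{thm_mainII} applied at parameter $r^2$ via the scaling $T_m \mapsto rT_m$ of Lemma~\ref{lem_basic}, with $J_m = r^{-1}S_m$ and the same scalar bookkeeping. Your direct argument for $(2)\Rightarrow(1)$ via $(J_m^*J_m - r^2I)(J_m^*J_m - r^{-2}I)=0$ is just an unpacking of Proposition~\ref{prop_3.2_in_Mc}, which is what the paper invokes there.
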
 	
In Theorem \ref{thm_dc_decomp} \& Theorem \ref{thm_dc_decompII}, we find characterizations for a doubly commuting tuple of operators in $C_{1,r}$ class and $QA_r$ class respectively. In Theorem \ref{thm_dc_C1r}, we present the decomposition result for doubly commuting tuples of operators in $C_{1,r}$ class. One can then obtain an analogous decomposition result for doubly commuting operators in quantum annulus.
	\section{Dilation of doubly commuting operators in $\Q$ and quantum annulus}\label{sec_0004}
	
	\vspace{0.2cm}

	\noindent  In this Section, we present dilation results for finitely many doubly commuting operators in $C_{1,r}$ class and quantum annulus. To begin with, we recall from literature \cite{Pas-McCull, N-S1} that
	\begin{align*}
		QA_r&=\{T: \ \text{$T$ is invertible} \ (r^{-2}+r^2)I-T^*T-T^{-1}T^{-*} \geq 0 \} \ \ \text{and} \\
		C_{1,r}&=\{T: \ \text{$T$ is invertible} \ (1+r^2)I-T^*T-r^2T^{-1}T^{-*} \geq 0 \}.
	\end{align*}
Following the terminologies in \cite{Pas-McCull}, we define $\mathcal{Q}A_r$ to be the class of operators $S$ acting on a Hilbert space $\HS$ such that there exist orthogonal projections $P_0, P_1$ on $\HS$ satisfying $P_0+P_1=I_\HS$ and $S^*S=r^2P_0+r^{-2}P_1$. Evidently, $\mathcal{Q}A_r \subseteq QA_r$. The next result gives a better description of $\mathcal{Q}A_r$.
	
	\begin{prop}[\cite{Pas-McCull}, Proposition 3.2]\label{prop_3.2_in_Mc}
		An invertible operator $S$ acting on a Hilbert space $\HS$ is in $\mathcal{Q}A_r$ if and only if $(r^{-2}+r^2)I_\HS-S^*S-S^{-1}S^{-*}=0$.
	\end{prop}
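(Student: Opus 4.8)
The plan is to prove both implications directly from the definitions, using the given algebraic characterizations of $\mathcal{Q}A_r$ and the constraint equation.

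First I would prove the forward direction. Suppose $S \in \mathcal{Q}A_r$, so there are orthogonal projections $P_0, P_1$ with $P_0 + P_1 = I_\HS$ and $S^*S = r^2 P_0 + r^{-2} P_1$. Since $S$ is invertible, $S^*S$ is invertible, which is consistent as $r^2 P_0 + r^{-2} P_1$ is invertible with inverse $r^{-2}P_0 + r^2 P_1$ (using $P_0 P_1 = 0$ and $P_0 + P_1 = I$). Hence $(S^*S)^{-1} = r^{-2}P_0 + r^2 P_1$, i.e. $S^{-1}S^{-*} = r^{-2}P_0 + r^2 P_1$. Adding, $S^*S + S^{-1}S^{-*} = (r^2 + r^{-2})(P_0 + P_1) = (r^{-2}+r^2)I_\HS$, which rearranges to $(r^{-2}+r^2)I_\HS - S^*S - S^{-1}S^{-*} = 0$.

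For the converse, suppose $S$ is invertible and $(r^{-2}+r^2)I_\HS - S^*S - S^{-1}S^{-*} = 0$. Write $A = S^*S$, a positive invertible operator, and note $S^{-1}S^{-*} = (S^*S)^{-1} = A^{-1}$, so the hypothesis reads $A + A^{-1} = (r^{-2}+r^2)I_\HS$. Multiplying by $A$ gives $A^2 - (r^{-2}+r^2)A + I_\HS = 0$, i.e. $(A - r^2 I_\HS)(A - r^{-2}I_\HS) = 0$ (the two factors commute since both are polynomials in $A$). By the functional calculus for the self-adjoint operator $A$, its spectrum is contained in $\{r^2, r^{-2}\}$, so $A = r^2 P_0 + r^{-2} P_1$ where $P_0, P_1$ are the spectral projections of $A$ corresponding to $\{r^2\}$ and $\{r^{-2}\}$ respectively; these are orthogonal projections with $P_0 P_1 = 0$ and $P_0 + P_1 = I_\HS$ (since $r \ne 1$ forces $r^2 \ne r^{-2}$, so the two eigenvalues are distinct and exhaust the spectrum). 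Thus $S^*S = r^2 P_0 + r^{-2} P_1$ exhibits $S \in \mathcal{Q}A_r$.

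The only mild subtlety, rather than a genuine obstacle, is justifying the spectral decomposition $A = r^2 P_0 + r^{-2} P_1$ from the polynomial identity $(A - r^2 I)(A - r^{-2}I) = 0$: one invokes the continuous (or Borel) functional calculus to conclude $\sigma(A) \subseteq \{r^2, r^{-2}\}$ and then builds $P_0, P_1$ as the idempotents $\tfrac{A - r^{-2}I}{r^2 - r^{-2}}$ and $\tfrac{A - r^2 I}{r^{-2} - r^2}$, checking they are self-adjoint (as real polynomials in the self-adjoint $A$), complementary, and orthogonal. Everything else is routine algebra with commuting operators.
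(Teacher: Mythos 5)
Your proof is correct; the paper itself states this as a citation to McCullough--Pascoe and gives no proof, and your argument (invert $r^2P_0+r^{-2}P_1$ for one direction; for the other, reduce to the quadratic $(A-r^2I)(A-r^{-2}I)=0$ for $A=S^*S$ and extract the spectral idempotents $\tfrac{A-r^{-2}I}{r^2-r^{-2}}$ and $\tfrac{A-r^2I}{r^{-2}-r^2}$) is exactly the standard one used in the cited source. No gaps.
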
  	

One of the main results in \cite{Pas-McCull} is the following dilation result for $QA_r$ class.

\begin{thm}[\cite{Pas-McCull}, Theorem 1.1] \label{thm_main_an}
	Let $T$ be an invertible operator acting on a Hilbert space $\mathcal{H}$. Then
	$T \in QA_r$ if and only if  $T$ admits a dilation to an operator in $\mathcal{Q}A_r$ class.
\end{thm}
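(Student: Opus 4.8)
The plan is to prove the two implications separately; the reverse one is immediate from the definitions, so the forward direction carries all the content. For the reverse implication, suppose $V\colon\mathcal H\to\mathcal K$ is an isometry with $T^n=V^*S^nV$ for every $n\in\mathbb Z$, where $S\in\mathcal{Q}A_r$. Since $\mathcal{Q}A_r\subseteq QA_r$ we have $\|rS\|\le 1$ and $\|rS^{-1}\|\le 1$; specialising the dilation identity to $n=1$ and $n=-1$ gives $T=V^*SV$ and $T^{-1}=V^*S^{-1}V$, so $\|rT\|\le r\|S\|\le 1$ and $\|rT^{-1}\|\le r\|S^{-1}\|\le 1$, and since $T$ is invertible by hypothesis this is exactly $T\in QA_r$.

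For the forward implication, assume $T\in QA_r$. First I would record two reformulations. On the target side, by Proposition \ref{prop_3.2_in_Mc} an invertible $S$ lies in $\mathcal{Q}A_r$ precisely when $(S^*S-r^2I)(S^*S-r^{-2}I)=0$; equivalently, writing $S=W|S|$ for its polar decomposition with $W$ unitary, when $|S|=rP_0+r^{-1}P_1$ for complementary projections $P_0,P_1$, i.e. the spectrum of $|S|$ is contained in $\{r,r^{-1}\}$. On the hypothesis side, $T\in QA_r$ is equivalent (directly from $\|rT\|,\|rT^{-1}\|\le 1$) to $r^2I\le T^*T\le r^{-2}I$, hence also $r^2I\le TT^*\le r^{-2}I$, so the defect operators
\[
D_-=(T^*T-r^2I)^{1/2},\qquad D_+=(r^{-2}I-T^*T)^{1/2}
\]
are positive and satisfy $D_-^2+D_+^2=(r^{-2}-r^2)I$, with analogous operators $D_{*\pm}$ built from $TT^*$; from $T\,T^*T=TT^*\,T$ one obtains the intertwinings $TD_\pm=D_{*\pm}T$ and $D_\pm T^*=T^*D_{*\pm}$.

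Next I would construct $S$ by a Sz.-Nagy--Schäffer type argument. Take $\mathcal K=\bigoplus_{n\in\mathbb Z}\mathcal H_n$ with $\mathcal H_0=\mathcal H$ and the remaining summands copies of the closures of ranges of the defect operators, embed $\mathcal H$ as the $0$-th summand, and define $S$ as a banded operator on $\mathcal K$ whose only nonzero blocks lie on or adjacent to the main diagonal, with the block straddling the $0$-slot assembled from $T$, $D_\pm$, $D_{*\pm}$ so that: (i) $S$ is bounded and boundedly invertible; (ii) $S^*S$ is block-diagonal with each block equal to $r^2I$ or $r^{-2}I$, so $S\in\mathcal{Q}A_r$; and (iii) the banded pattern makes $\mathcal H=\mathcal H_0$ a two-sided semi-invariant subspace for $S$, whence $P_{\mathcal H}S^n|_{\mathcal H}=(P_{\mathcal H}S|_{\mathcal H})^n=T^n$ for all $n\in\mathbb Z$. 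Properties (i) and (iii), and the identity asserted in (ii), then reduce to bookkeeping with $D_-^2+D_+^2=(r^{-2}-r^2)I$ and the intertwinings above.

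The hard part will be arranging (ii). In the classical contraction-to-unitary dilation one gets $U^*U=I$ block by block for free, whereas here the two-sided estimate $r^2I\le T^*T\le r^{-2}I$ must be \emph{promoted}, inside $\mathcal K$, to a genuine orthogonal splitting $I=P_0+P_1$ realising $S^*S=r^2P_0+r^{-2}P_1$ with no residual spectrum in the open interval $(r^2,r^{-2})$, and this has to be achieved simultaneously with the invertibility of $S$ and with the banded shape needed for (iii). Choosing the defect spaces and the off-diagonal entries so that all three requirements hold at once is the delicate step; once the block pattern is fixed correctly the remaining checks are direct computations. An alternative, more abstract route would be to establish an algebraic certificate --- a Positivstellensatz for $\mathcal{Q}A_r$ --- showing that any Laurent polynomial $p$ with $p(S)\ge 0$ for all $S\in\mathcal{Q}A_r$ already satisfies $p(T)\ge 0$ whenever $T\in QA_r$, and then to run an Arveson-extension/GNS construction to produce the dilating $S\in\mathcal{Q}A_r$; in that approach the certificate is the main obstacle.
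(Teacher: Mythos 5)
Your reverse implication is correct and is essentially the argument used in this paper (compare the step $(2)\Rightarrow(1)$ in the proof of Theorem \ref{thm_mainQAr}): compress the identities at $n=\pm1$ and use $\|T\|\le\|S\|$, $\|T^{-1}\|\le\|S^{-1}\|$ together with $S^*S=r^2P_0+r^{-2}P_1$. The preparatory reformulations in the forward direction (the equivalence $T\in QA_r\iff r^2I\le T^*T\le r^{-2}I$, the defect operators with $D_-^2+D_+^2=(r^{-2}-r^2)I$, and the description of $\mathcal{Q}A_r$ via Proposition \ref{prop_3.2_in_Mc}) are also correct.

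However, the forward direction --- which is the entire content of the theorem --- is not proved. You yourself flag the decisive point: producing a banded, boundedly invertible $S$ on $\bigoplus_{n\in\mathbb Z}\mathcal H_n$ with $S^*S$ having spectrum exactly $\{r^2,r^{-2}\}$ \emph{and} with $\mathcal H_0$ semi-invariant for all integer powers of $S$. No such Sz.-Nagy--Sch\"affer block matrix is exhibited, and it is not clear one exists: unlike the isometric/unitary case, the condition $S^*S=r^2P_0+r^{-2}P_1$ is not a row-by-row normalization but a global spectral constraint that must coexist with invertibility and with semi-invariance for $S^{-1}$ as well as $S$, and the known proofs avoid this route entirely. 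The actual argument (McCullough--Pascoe, reproduced in $d$ variables in Theorem \ref{lem_main} and the proof of Theorem \ref{thm_mainII}) is function-theoretic: write $T=UD$ with $U$ unitary, first treat the case where $\sigma(D)$ is a finite subset of the open interval, choose a surjective analytic $v:\D\to\mathbb A_r$ carrying $0$ to an interior point and the upper and lower arcs of $\T$ onto the two boundary circles, build the operator-valued function $F(z)=U\widehat D(z)$ by postcomposing with M\"obius maps so that $F(0)=T$, and let $S$ be multiplication by $F$ on $L^2(\T)\otimes\mathcal H$; membership in $\mathcal{Q}A_r$ is automatic because $|F(\zeta)|$ takes only the two extreme values a.e.\ on $\T$, and the dilation identity for all $n\in\mathbb Z$ comes from the mean value property of $z\mapsto\langle F(z)^nh,h'\rangle$. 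The general case then follows by approximating $D$ by simple functions, taking a direct sum of the resulting dilations, and applying Arveson's extension theorem and Stinespring's theorem. Your alternative ``Positivstellensatz'' route likewise leaves its key certificate unestablished; note that the paper's completely positive map $\Phi$ needs no such certificate because positivity is inherited from the already-constructed dilations of the approximants. As it stands, your proposal establishes only the easy half.
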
 	
	
	Analogous to $\mathcal{Q}A_r$, we define $\mathcal{C}_{1,r}$ to be the class consisting of operators $J$ acting on a Hilbert space $\HS$ such that there exist orthogonal projections $P_0, P_1$ on $\HS$ satisfying $P_0+P_1=I_{\HS}$ and $J^*J=P_0+r^2P_1$. Consequently, we have an analog of Proposition \ref{prop_3.2_in_Mc} for $\mathcal{C}_{1, r}$ class. 
	
	\begin{prop}\label{prop_QSA}
		An invertible operator $J$ acting on a Hilbert space $\HS$ is in $\mathcal{C}_{1, r}$ if and only if  $(1+r^2)I_\HS-J^*J-r^2J^{-1}J^{-*}=0$. Moreover, $J \in \mathcal{C}_{1,r}$ if and only if $r^{-1\slash 2}J \in \mathcal{Q}A_{\sqrt{r}}$.
	\end{prop}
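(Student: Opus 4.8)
The plan is to reduce both assertions to an elementary quadratic identity for the positive invertible operator $A:=J^{*}J$, using that invertibility of $J$ gives $J^{-1}J^{-*}=(J^{*}J)^{-1}=A^{-1}$.

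First I would handle the equivalence ``$J\in\mathcal{C}_{1,r}$ if and only if $(1+r^{2})I_{\HS}-J^{*}J-r^{2}J^{-1}J^{-*}=0$''. For the forward direction, assume $A=P_0+r^{2}P_1$ with $P_0,P_1$ complementary orthogonal projections; since $P_0P_1=P_1P_0=0$ one sees at once that $A^{-1}=P_0+r^{-2}P_1$, hence $J^{-1}J^{-*}=P_0+r^{-2}P_1$, and then a one-line computation gives $J^{*}J+r^{2}J^{-1}J^{-*}=(1+r^{2})(P_0+P_1)=(1+r^{2})I_{\HS}$. For the converse, substitute $A^{-1}$ for $J^{-1}J^{-*}$ in the identity and multiply by $A$ to obtain $A^{2}-(1+r^{2})A+r^{2}I_{\HS}=0$, i.e. $(A-I_{\HS})(A-r^{2}I_{\HS})=0$, so $\sigma(A)\subseteq\{1,r^{2}\}$ (the two values being distinct since $r\in(0,1)$). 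I would then take $P_0,P_1$ to be the spectral projections of $A$ at $1$ and $r^{2}$ (equivalently $P_0=(1-r^{2})^{-1}(A-r^{2}I_{\HS})$ and $P_1=(r^{2}-1)^{-1}(A-I_{\HS})$) and verify, using the quadratic relation $A^{2}=(1+r^{2})A-r^{2}I_{\HS}$, that they are self-adjoint idempotents with $P_0+P_1=I_{\HS}$ (whence $P_0P_1=0$) and $P_0+r^{2}P_1=A=J^{*}J$, so that $J\in\mathcal{C}_{1,r}$.

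For the ``moreover'' part I would simply compare the two governing identities. By the equivalence just established, $J\in\mathcal{C}_{1,r}$ if and only if $(1+r^{2})I_{\HS}-J^{*}J-r^{2}J^{-1}J^{-*}=0$. Applying Proposition \ref{prop_3.2_in_Mc} with parameter $\sqrt{r}$, the operator $r^{-1/2}J$ lies in $\mathcal{Q}A_{\sqrt{r}}$ if and only if $(r^{-1}+r)I_{\HS}-(r^{-1/2}J)^{*}(r^{-1/2}J)-(r^{-1/2}J)^{-1}(r^{-1/2}J)^{-*}=0$; since $(r^{-1/2}J)^{*}(r^{-1/2}J)=r^{-1}J^{*}J$ and $(r^{-1/2}J)^{-1}(r^{-1/2}J)^{-*}=rJ^{-1}J^{-*}$, this reads $(r^{-1}+r)I_{\HS}-r^{-1}J^{*}J-rJ^{-1}J^{-*}=0$, and multiplying through by $r$ returns exactly the $\mathcal{C}_{1,r}$ identity. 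Hence the two membership conditions coincide.

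I do not expect a genuine obstacle: the argument is a transcription of the proof of Proposition \ref{prop_3.2_in_Mc}. The only points requiring attention are that invertibility of $J$ is what licenses the substitution $J^{-1}J^{-*}=A^{-1}$, and the routine check that the explicitly exhibited $P_0,P_1$ really are orthogonal projections summing to $I_{\HS}$ — which is precisely where the factorization $(A-I_{\HS})(A-r^{2}I_{\HS})=0$ is used.
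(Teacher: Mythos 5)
Your argument is correct and matches the paper's intent: the paper offers no written proof beyond the remark that the proposition ``follows directly from the definition of $\mathcal{C}_{1,r}$ and Proposition \ref{prop_3.2_in_Mc}'', and your scaling computation for $r^{-1/2}J$ together with the quadratic identity $(A-I_\HS)(A-r^2I_\HS)=0$ for $A=J^*J$ is precisely the verification being left to the reader. The only (harmless) difference is one of logical order: you prove the first equivalence from scratch by transcribing the proof of Proposition \ref{prop_3.2_in_Mc} and then deduce the ``moreover'' part from it, whereas the paper intends to obtain the first equivalence by applying Proposition \ref{prop_3.2_in_Mc} directly to $r^{-1/2}J$ with parameter $\sqrt{r}$.
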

	
	The proof of Proposition \ref{prop_QSA} follows directly from the definition of $\mathcal{C}_{1, r}$ and Proposition \ref{prop_3.2_in_Mc}. Again a simple application of Lemma \ref{lem_basic} and Theorem \ref{thm_main_an} gives the following dilation result for $C_{1,r}$ class, which is an analog of Theorem \ref{thm_main_an}.
	
	\begin{thm}\label{thm_main}
		Let $T$ be an invertible operator acting on a Hilbert space $\mathcal{H}$. Then
		$T \in \Q$ if and only if $T$ admits a dilation to an operator in  $\mathcal{C}_{1, r}$ class.
	\end{thm}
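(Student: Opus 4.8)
The plan is to reduce Theorem \ref{thm_main} to Theorem \ref{thm_main_an} via the correspondence in Lemma \ref{lem_basic}, exactly as the text hints. First I would record the translation between dilations: if $V:\HS\to\KS$ is an isometry and $A=V^*BV$ with $B$ an operator on $\KS$, then for any scalar $\lambda\neq 0$ we also have $(\lambda A)^n = V^*(\lambda B)^nV$ for every integer $n$, so $V$ simultaneously realizes $\lambda A$ as a power-dilation of $\lambda B$ whenever $B$ is invertible (note $A=V^*BV$ together with $A^{-1}=V^*B^{-1}V$ is the correct notion of dilation for invertible operators here). Thus scaling by a fixed nonzero constant commutes with the operation of taking such a dilation.

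Next I would use Lemma \ref{lem_basic} in the form: $T\in\Q$ if and only if $r^{-1/2}T\in QA_{\sqrt r}$. Applying Theorem \ref{thm_main_an} with parameter $\sqrt r$ in place of $r$, the operator $r^{-1/2}T$ lies in $QA_{\sqrt r}$ if and only if it admits a dilation to an operator $S$ in $\mathcal{Q}A_{\sqrt r}$. By the scaling remark above, $r^{-1/2}T$ dilates to $S\in\mathcal{Q}A_{\sqrt r}$ precisely when $T$ dilates to $r^{1/2}S$. Finally, by the second assertion of Proposition \ref{prop_QSA}, $r^{1/2}S\in\mathcal{C}_{1,r}$ if and only if $r^{-1/2}(r^{1/2}S)=S\in\mathcal{Q}A_{\sqrt r}$; equivalently, writing $J=r^{1/2}S$, we have $S=r^{-1/2}J\in\mathcal{Q}A_{\sqrt r}\iff J\in\mathcal{C}_{1,r}$. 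Stringing these equivalences together: $T\in\Q \iff r^{-1/2}T\in QA_{\sqrt r}\iff r^{-1/2}T$ dilates to some $S\in\mathcal{Q}A_{\sqrt r}\iff T$ dilates to some $J=r^{1/2}S\in\mathcal{C}_{1,r}$, which is the claim.

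There is essentially no hard step here; the content is entirely in Theorem \ref{thm_main_an}, and the proof is the bookkeeping of checking that the rescaling $T\mapsto r^{-1/2}T$ is compatible with all three notions involved (membership in the class, the dilation relation, and membership in the model class). The only point requiring a line of care is verifying that the rescaled isometry-dilation relation is genuinely two-sided in the invertible setting, i.e. that $A=V^*BV$ and $A^{-1}=V^*B^{-1}V$ transfer to $\lambda A$ and $(\lambda A)^{-1}$; this is immediate since $(\lambda A)^{-1}=\lambda^{-1}A^{-1}=V^*(\lambda^{-1}B^{-1})V=V^*(\lambda B)^{-1}V$, and more generally $(\lambda A)^n=\lambda^n A^n=V^*(\lambda B)^nV$ for all $n\in\mathbb Z$. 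With that observation in hand the proof is a two-line chain of "if and only if" statements, and I would present it as such rather than repeating any of the machinery from \cite{Pas-McCull}.
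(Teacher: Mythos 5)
Your argument is correct and is exactly the route the paper intends: it states that Theorem \ref{thm_main} follows from ``a simple application of Lemma \ref{lem_basic} and Theorem \ref{thm_main_an}'', and your chain of equivalences (rescaling by $r^{\pm 1/2}$, transporting the dilation relation $T^n=V^*J^nV$ through the scaling, and invoking the second assertion of Proposition \ref{prop_QSA} to identify $r^{1/2}\mathcal{Q}A_{\sqrt r}$ with $\mathcal{C}_{1,r}$) is precisely the bookkeeping the paper leaves to the reader. Nothing is missing.
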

	
 It is evident that Theorems \ref{thm_mainII} \&  \ref{thm_mainQAr} are generalizations of Theorems \ref{thm_main} \& \ref{thm_main_an} respectively which are credited to McCullough and Pascoe \cite{Pas-McCull}. We follow the similar techniques as in \cite{Pas-McCull} to obtain the desired dilation result. To do so, we begin with the following theorem which can be seen as a particular case of Theorem \ref{thm_mainII}. 
	
	\begin{thm}\label{lem_main}
		Let $(T_1, \dotsc, T_d)$ be a doubly commuting tuple of  operators in $\Q$ acting on a Hilbert space $\HS$. If $\sigma((T_j^*T_j)^{1\slash 2})$ is a finite subset of $(r, 1)$ for $ 1 \leq j \leq d$, then there exist a Hilbert space $\KS$, an isometry $V: \HS \to \KS$ and a doubly commuting tuple $(M_1, \dotsc, M_d)$ of operators in $\mathcal{C}_{1, r}$ acting on $\KS$ such that 
		$
		T_1^{n_1}\dotsc T_d^{n_d}=V^*M_1^{n_1}\dotsc M_d^{n_d}V
		$
		for all integers $n_1, \dotsc, n_d$. 
	\end{thm}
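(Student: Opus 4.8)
The plan is to reduce the multivariable statement to the single-variable dilation theorem (Theorem \ref{thm_main}) by exploiting doubly commutativity, which will let me dilate one coordinate at a time while leaving the already-dilated coordinates intact. First I would use the spectral hypothesis: since $\sigma((T_j^*T_j)^{1/2})$ is a finite subset of $(r,1)$, each positive operator $(T_j^*T_j)^{1/2}$ has a finite spectral decomposition, so $\HS$ splits as an orthogonal direct sum of the corresponding spectral subspaces. Because the tuple is doubly commuting, $T_j^*T_j$ commutes with $T_k$ and $T_k^*$ for $k \neq j$, hence all the spectral projections of all the $T_j^*T_j$ commute with one another and with every $T_k$. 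This simultaneous diagonalization is the technical engine: on each joint spectral subspace $\HS_\alpha$ each $T_j$ acts as a scalar multiple of a unitary (its modulus is constant there), which makes building the dilation on that piece very explicit.

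The core construction I would carry out is the following. Work one coordinate at a time: suppose inductively that after handling $T_1, \dotsc, T_{m-1}$ we have an isometry $V_{m-1}: \HS \to \KS_{m-1}$ and doubly commuting operators $M_1, \dotsc, M_{m-1} \in \mathcal C_{1,r}$ on $\KS_{m-1}$ dilating the partial products, and moreover that $T_m, \dotsc, T_d$ (which still act naturally) are realized as doubly commuting operators on $\KS_{m-1}$ that doubly commute with $M_1, \dotsc, M_{m-1}$ as well. Now apply (the proof of) Theorem \ref{thm_main} to $T_m$: on each joint spectral subspace, $T_m = \lambda W$ with $r < |\lambda| < 1$ and $W$ unitary; a direct computation shows $|\lambda|^2 = t P_0' + r^2 t^{-1} P_1'$ type decomposition is attainable by passing to $\KS_{m-1} \otimes \ell^2(\mathbb Z)$ (or a bilateral weighted-shift extension) and defining $M_m$ there as a weighted shift whose weights alternate between the two values dictated by the $\mathcal C_{1,r}$ condition $M_m^* M_m = P_0 + r^2 P_1$. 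Because the extension is performed by tensoring with a fixed space in the new coordinate direction, the operators $M_1, \dotsc, M_{m-1}$ and $T_{m+1}, \dotsc, T_d$ extend to the enlarged space (as $A \otimes I$) and continue to doubly commute with each other and with the new $M_m$; one checks $M_m$ is invertible and lies in $\mathcal C_{1,r}$, and that $V_m = V_{m-1}$ composed with the inclusion $\KS_{m-1} \hookrightarrow \KS_{m-1} \otimes \ell^2(\mathbb Z)$ still intertwines the powers correctly — the point is that $V_m^* M_m^{n} V_m = T_m^n$ on $\HS$ while the inclusion is compatible with $M_1, \dotsc, M_{m-1}$ because those act as $A \otimes I$ and the inclusion lands in a distinguished fiber. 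After $d$ steps, setting $\KS = \KS_d$ and $V = V_d$ finishes the construction.

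The main obstacle I anticipate is verifying that the successive dilations can be chosen to preserve doubly commutativity \emph{simultaneously}: it is easy to dilate each $T_m$ individually into $\mathcal C_{1,r}$, but one must ensure the dilating operator $M_m$ can be built so that it commutes and $*$-commutes with the previously constructed $M_1, \dotsc, M_{m-1}$ (not merely with the original $T_1, \dotsc, T_{m-1}$, which are no longer literally acting on the big space). The finiteness of the spectra is precisely what makes this tractable: it reduces the ``weights'' in the shift construction to finitely many scalars organized by the commuting family of joint spectral projections, so the dilation of $T_m$ can be performed fiberwise over a decomposition that is already respected by $M_1, \dotsc, M_{m-1}$, and the tensor-by-$\ell^2(\mathbb Z)$ step then makes the commutation with the earlier coordinates automatic. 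A secondary check is that each $M_m$ is genuinely invertible (so that ``doubly commuting'' and the defining identity of $\mathcal C_{1,r}$ make sense), which follows since its weights are bounded below by $r > 0$; and that the isometry $V$ intertwines \emph{negative} powers as well, which is why invertibility of everything in sight must be tracked throughout.
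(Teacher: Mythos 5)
Your opening step --- simultaneous spectral decomposition of the $(T_j^*T_j)^{1/2}$ into finitely many commuting projections, with $T_j=\lambda_{j,\alpha}U_j$ on each joint spectral subspace --- matches the paper's setup. But the core dilation mechanism you describe has a genuine gap. You propose to realize $M_m$ as a bilateral weighted shift on $\KS_{m-1}\otimes\ell^2(\mathbb Z)$ with weights ``alternating between the two values dictated by $M_m^*M_m=P_0+r^2P_1$.'' This does not produce a dilation: for a scalar $\lambda\in(r,1)$ one needs a unit vector $\xi$ with $\la M^n\xi,\xi\ra=\lambda^n$ for \emph{all} $n\in\mathbb Z$, and a shift whose weights merely alternate between $1$ and $r$ cannot reproduce these moments (the asymptotics of the weight products are pinned to $\sqrt{r}$ rather than to $\lambda$, and in any case no verification of the moment identities is offered). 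The device the paper actually uses --- imported from McCullough--Pascoe --- is an analytic surjection $v:\D\to\A$ with $v(0)=\sqrt r$, $|v|=r$ on the upper arc and $|v|=1$ on the lower arc, precomposed with a M\"obius map $\varphi_{j,\alpha}$ so that $v_{j,\alpha}(0)=\lambda_{j,\alpha}$; the dilation is multiplication by $F_j(\zeta)=\sum_\alpha v_{j,\alpha}(\zeta)U_jP_{j,\alpha}$ on $L^2(\T)\otimes\HS$, and the intertwining of all integer powers is exactly the mean value property of the holomorphic functions $z\mapsto\la F_1^{n_1}(z)\dotsm F_d^{n_d}(z)h,h'\ra$ at $z=0$. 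Without this (or an equivalent) analytic ingredient, your appeal to ``the proof of Theorem \ref{thm_main}'' is not available inside this paper, since that theorem's proof is itself a citation to the McCullough--Pascoe construction.

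Two secondary points. First, your inductive tensoring scheme is workable in principle (it would land in $L^2(\T^d)\otimes\HS$ with the earlier $M_i$ acting as $A\otimes I$), but it is unnecessary: because the operator-valued symbols $F_1(\zeta),\dotsc,F_d(\zeta)$ pairwise doubly commute, a \emph{single} copy of $L^2(\T)$ dilates all $d$ coordinates at once, which is how the paper avoids your ``main obstacle'' about preserving double commutativity across successive dilations. Second, even granting your construction of each $M_m$, you only assert $V_m^*M_m^nV_m=T_m^n$ coordinatewise; the joint identity $V^*M_1^{n_1}\dotsm M_d^{n_d}V=T_1^{n_1}\dotsm T_d^{n_d}$ for mixed (including negative) exponents is the actual content of the theorem and is never verified through the induction.
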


	\begin{proof}
		For $1 \leq j \leq d$, let us define $D_j=(T_j^*T_j)^{1\slash 2}$ and $U_j=T_jD_j^{-1}$. Each $D_j$ is an invertible positive operator since $T_j^*T_j$ is invertible. Note that 
		\[
		U_j^*U_j=D_j^{-1}T_j^*T_jD_j^{-1}=D_j^{-1}D_j^2D_j^{-1}=I_\HS \quad \text{and} \quad U_jU_j^*=T_jD_j^{-2}T_j^*=T_j(T_j^*T_j)^{-1}T_j^*=I_\HS.
		\]
		Thus, $U_j$ is a unitary and $T_j=U_jD_j$ for $1 \leq j \leq d$. Let $1 \leq i, j \leq d$ with $i \ne j$. The doubly commutativity hypothesis gives that $D_i^2D_j^2=D_j^2D_i^2, T_iD_j^2=D_j^2T_i$ and so by spectral theorem, we have that $D_iD_j=D_jD_i$ and $T_iD_j=D_jT_i$. Consequently, we have
		\begin{equation}\label{eqn_01}
		T_iD_j=D_jT_i, \quad 	D_iD_j=D_jD_i, \quad U_iU_j=U_jU_i \quad \text{and} \quad U_iD_j=D_jU_i \quad (i \ne j).
		\end{equation}
		Let $\underline{D}=(D_1, \dotsc, D_d)$ and let $\Omega=\sigma_T(\underline{D})$. We have by the projection property of Taylor-joint spectrum that $\Omega \subseteq \sigma(D_1) \times \dotsc \times \sigma(D_d)$ which is a finite subset of $(r, 1)^d$. It follows from the spectral theorem  that there are finitely many orthogonal projections $P_{j, 1}, \dotsc, P_{j, m_j}$ summing to the identity and $\lambda_{j, \alpha} \in (r, 1)$ for $1 \leq \alpha \leq m_j$ and $1 \leq j \leq d$ such that $D_j$ can be written as 
		\[
		D_j=\overset{m_j}{\underset{\alpha=1}{\sum}}\lambda_{j, \alpha} P_{j, \alpha} \qquad (1 \leq j \leq d).
		\]
		Indeed, a repeated application of the spectral theorem yields that
		\begin{equation}\label{eqn_02}
		 P_{i, \beta}P_{j, \alpha}=P_{j, \alpha}P_{i, \beta} \qquad \text{and} \qquad U_kP_{i, \beta}=P_{i, \beta}U_k
		\end{equation}
		for $1 \leq \alpha \leq m_j, 1 \leq \beta \leq m_i$ and $ 1 \leq i, j, k \leq d$ with $i \ne k$. Let $\T^+=\{e^{i\theta} : 0<\theta< \pi\}$ and  $\T^-=\{e^{i\theta} : -\pi< \theta < 0\}$ denote the upper and lower open arcs of $\T$ respectively. Following the proof of Lemma 3.1 in \cite{Pas-McCull}, one can choose an analytic surjective map $v: \D \to \A$ such that 
		\begin{equation}\label{eqn_03}
			v(0)=\sqrt{r},\quad  v(\T^+)=r\T , \quad v(\T^-)=\T
		\end{equation}
		and $v$ extends to $\T$ except at $\pm 1$. Given $\lambda_{j, \alpha} \in (r, 1)$, we have the existence of $w_{j, \alpha} \in \D$ such that 
		\begin{equation}\label{eqn_04}
			v(w_{j, \alpha})=\lambda_{j, \alpha} 
		\end{equation}
		for $1 \leq \alpha \leq m_j$ and $1 \leq j  \leq d$. Let us define 
		\[
		v_{j, \alpha}=v \circ \varphi_{j, \alpha}, \quad \text{where} \quad \varphi_{j, \alpha}(z)=\frac{w_{j, \alpha}-z}{1-\overline{w}_{j, \alpha}z} \quad (z \in \D)
		\]
		for $1 \leq \alpha \leq m_j$ and $1 \leq j  \leq d$. Note that $\varphi_{j, \alpha}$ is an automorphism of $\D$ with $\varphi_{j, \alpha}^{-1}=\varphi_{j, \alpha}$. Evidently, we have for each $\alpha$ and $j$ that
		\begin{equation}\label{eqn_05}
			v_{j, \alpha}(0)=v(\varphi_{j, \alpha}(0))=v(w_{j, \alpha})=\lambda_{j, \alpha}.
		\end{equation}
		Let $\T_{j, \alpha}^{+}=\varphi_{j, \alpha}(\T^+)$ and $\T_{j, \alpha}^{-}=\varphi_{j, \alpha}(\T^-)$. We have by (\ref{eqn_03}) that
		\begin{align}\label{eqn_06}
			v_{j, \alpha}(\T_{j, \alpha}^+)=v\circ \varphi_{j, \alpha}(\T_{j, \alpha}^+)=v(\T^+)=r\T \quad \text{and} \quad v_{j, \alpha}(\T_{j, \alpha}^-)=v\circ \varphi_{j, \alpha}(\T_{j, \alpha}^-)=v(\T^-)=\T.
		\end{align}
		For $1 \leq j \leq d$, we define
		\[
		\widehat{D}_j: \D \to \mathcal{B}(\HS), \quad \widehat{D}_j(z)=\overset{m_j}{\underset{\alpha=1}{\sum}}v_{j, \alpha}(z) P_{j, \alpha}.
		\]
		Each $\widehat{D}_j(z)$ is a normal operator since it is a sum of commuting normal operators. Also, $\widehat{D}_j(0)=D_j$ which follows from \eqref{eqn_02} and \eqref{eqn_05}. Moreover, $\widehat{D}_j(\zeta)$ exists for all but finitely many points $\zeta$ in $\T$. We have by (\ref{eqn_06}) that $v_{j, \alpha}(\zeta) \ne 0$ for all $\zeta \in \T$ except at finitely many points. Consequently, $\widehat{D}_j(\zeta)$ is invertible for all but finitely many points $\zeta \in \T$ and
		$\displaystyle 
		\widehat{D}_j(\zeta)^{-1}=\underset{1\leq \alpha \leq m_j}{\sum}v_{j, \alpha}(\zeta)^{-1} P_{j, \alpha},
		$
		whenever it exists. The invertibility of such $\widehat{D}_j(\zeta)$ follows from the fact that $\{P_{j, \alpha} : 1 \leq \alpha \leq m_j\}$ are mutually orthogonal projections that sum to the identity. It follows from (\ref{eqn_02}) that 
		\begin{equation}\label{eqn_07}
			\widehat{D}_j(z)U_k=U_k\widehat{D}_j(z) \quad \text{and} \quad \widehat{D}_j(z)\widehat{D}_i(z)=\widehat{D}_i(z)\widehat{D}_j(z)
		\end{equation}
		for $1 \leq i, j, k \leq d$ with $j \ne k$ and $z \in \D$. Consider the function given by 
		\[
		F_j: \D \to \mathcal{B}(\HS), \qquad F_j(z)=U_j\widehat{D}_j(z)=\overset{m_j}{\underset{\alpha=1}{\sum}}v_{j, \alpha}(z) U_jP_{j, \alpha} \quad (1 \leq j \leq d)
		\]
and then for all $z \in \D$ and $i \ne j$, we have that
		\begin{equation}\label{eqn_08}
			F_i(z)F_j(z)=F_j(z)F_i(z), \quad F_i(z)F_j(z)^*=F_j(z)^*F_i(z) \quad \text{and} \quad F_j(0)=T_j.
		\end{equation}
		The above equalities in \eqref{eqn_08} follow directly from \eqref{eqn_02}. Furthermore, $F_j$ extends to $\T$ except at finitely many points which follows from the construction of $v_{j, \alpha}$ as seen in (\ref{eqn_03}) and thereafter. Since each $v_{j, \alpha}$ has an extension to $\T$ except at finitely many points, we have that \eqref{eqn_08} holds for almost everywhere on $\T$ as well. It follows from the invertibility of $\widehat{D}_j(\zeta)$ almost everywhere on $\T$ that $F_j(\zeta)$ defines an invertible operator almost everywhere on $\T$. Let $Q_{j, \alpha}^{\pm}: L^2(\T) \to L^2(\T)$ be given by $Q_{j, \alpha}^{\pm}(f)=\chi_{j, \alpha}^{\pm}f$, where $\chi_{j, \alpha}^{\pm}$ are the characteristic functions of the arcs $\T_{j , \alpha}^{\pm}$ for $1 \leq j \leq d$ and $1 \leq \alpha \leq m_j$. Consider the space 
		$
		\mathcal{K}=L^2(\T) \otimes \HS.
		$ 
		Let $1 \leq j \leq d$. Since $\T_{j, \alpha}^+ \cup \T_{j, \alpha}^-$ is a subset of $\T$ and its complement in $\T$ is a finite set, we have $\chi_{j, \alpha}^+f + \chi_{j, \alpha}^-f=f$ for all $f \in L^2(\T)$ and $1 \leq \alpha \leq m_j$. Consequently, it follows that 
		\begin{small}
			\begin{equation*}
				\begin{split}
					\overset{m_j}{\underset{\alpha=1}{\sum}} Q_{j, \alpha}^{\pm} \otimes P_{j, \alpha}= \overset{m_j}{\underset{\alpha=1}{\sum}} (Q_{j, \alpha}^+ \otimes P_{j, \alpha})+(Q_{j, \alpha}^- \otimes P_{j, \alpha})=
					\overset{m_j}{\underset{\alpha=1}{\sum}} (Q_{j, \alpha}^++Q_{j, \alpha}^-) \otimes P_{j, \alpha}=  I_{L^2(\T)} \otimes \overset{m_j}{\underset{\alpha=1}{\sum}}P_{j, \alpha}=I_\mathcal{K},
				\end{split}
			\end{equation*}
		\end{small}
		where we have used the property that $\displaystyle \overset{m_j}{\underset{\alpha=1}{\sum}} P_{j, \alpha}=I_\HS$. Hence, the set $ \left\{ Q_{j, \alpha}^{\pm} \otimes P_{j, \alpha} : 1 \leq \alpha \leq m_j \right\} $ consists of pairwise orthogonal projections and  
		\begin{equation}\label{eqn_09}
			\overset{m_j}{\underset{\alpha=1}{\sum}} Q_{j, \alpha}^{\pm} \otimes P_{j, \alpha}=I_\mathcal{K}.
		\end{equation}
		For $1 \leq j \leq d$, let $M_j$ be the operator of multiplication by $F_j$ on $\mathcal{K}$. Fix $j \in \{1, \dotsc, d\}$. Note that $M_j$ is invertible and  $M_j^{-1}(f)(\zeta)=F_j(\zeta)^{-1}f(\zeta)$
		for all but finitely many points $\zeta \in \T$. The doubly commutativity of $M_1, \dotsc, M_d$ is a direct consequence of (\ref{eqn_08}) and the fact that $F_1(\zeta), \dotsc, F_d(\zeta)$ exist for all but finitely many points $\zeta \in \T$. Let $f \in L^2(\T), h \in \HS$ and $1 \leq \alpha \leq m_j$. For $\zeta \in \T$ such that $v_{j, \alpha}(\zeta)$ exists (which is a complement of a finite set), we have that 
		\begin{align}\label{eqn_10}
				M_j(Q_{j, \alpha}^{\pm} \otimes P_{j, \alpha})(f \otimes h) \zeta
				&=F_j(\zeta)(\chi_{j, \alpha}^{\pm}\left(\zeta)f(\zeta) \otimes P_{j, \alpha}h\right) \notag \\
				&=\left(\overset{m_j}{\underset{\beta=1}{\sum}}v_{j, \beta}(\zeta) U_jP_{j, \beta}\right)\left(\chi_{j, \alpha}^{\pm}(\zeta)f(\zeta) \otimes P_{j, \alpha}h\right) \notag \\
				&=\overset{m_j}{\underset{\beta=1}{\sum}}\left[\left(v_{j, \beta}(\zeta)\chi_{j, \alpha}^{\pm}(\zeta)f(\zeta)\right) \otimes \left( U_jP_{j, \beta} P_{j, \alpha}h\right)\right] \notag \\
				&=v_{j, \alpha}(\zeta)\chi_{j, \alpha}^{\pm}(\zeta)f(\zeta)\otimes U_jP_{j, \alpha}h,
		\end{align}
		where the last equality holds as $\{P_{j, \beta}: 1 \leq \beta \leq m_j\}$ consists of pairwise orthogonal projections. Hence, if $g \in L^2(\T)$ and $h' \in \HS$, then
		\begin{align}\label{eqn_11}
				&  \quad \la M_j^*M_j(Q_{j, \alpha}^{\pm} \otimes P_{j, \alpha})(f \otimes h)(\zeta), (g \otimes h')(\zeta) \ra \notag\\
				& =\la M_j(Q_{j, \alpha}^{\pm} \otimes P_{j, \alpha})(f \otimes h)(\zeta), M_j(g \otimes h')(\zeta) \ra \notag \\
				& =\la M_j(Q_{j, \alpha}^{\pm} \otimes P_{j, \alpha})(f \otimes h)(\zeta), \  M_j\left(\overset{m_j}{\underset{\beta=1}{\sum}} Q_{j, \beta}^{\pm} \otimes P_{j, \beta}\right)(g \otimes h')(\zeta) \ra \quad [\text{by} \ (\ref{eqn_09})] \notag \\
				& =\overset{m_j}{\underset{\beta=1}{\sum}} \la v_{j, \alpha}(\zeta)\chi_{j, \alpha}^{\pm}(\zeta)f(\zeta)\otimes U_jP_{j, \alpha}h, \  v_{j, \beta}(\zeta)\chi_{j, \beta}^{\pm}(\zeta)g(\zeta)\otimes U_jP_{j, \beta}h' \ra \quad [\text{by} \ (\ref{eqn_10})] \notag \\
				& =\overset{m_j}{\underset{\beta=1}{\sum}} \la v_{j, \alpha}(\zeta)\chi_{j, \alpha}^{\pm}(\zeta)f(\zeta), v_{j, \beta}(\zeta)\chi_{j, \beta}^{\pm}(\zeta)g(\zeta) \ra \la  U_jP_{j, \alpha}h, U_jP_{j, \beta}h' \ra \notag \\
				& =\overset{m_j}{\underset{\beta=1}{\sum}} \la v_{j, \alpha}(\zeta)\chi_{j, \alpha}^{\pm}(\zeta)f(\zeta), v_{j, \beta}(\zeta)\chi_{j, \beta}^{\pm}(\zeta)g(\zeta) \ra \la  P_{j, \alpha}h, P_{j, \beta}h' \ra  \quad [\text{as}  \ U_j \ \text{is unitary}] \notag \\
				& = |v_{j, \alpha}(\zeta)|^2\la \chi_{j, \alpha}^{\pm}(\zeta)f(\zeta), \chi_{j, \alpha}^{\pm}(\zeta)g(\zeta) \ra \la  P_{j, \alpha}h, P_{j, \alpha}h' \ra  \quad \left[\text{since} \ \{P_{j, \beta}\}_{\beta=1}^{m_j} \ \text{are orthogonal}\right] \notag \\
				&=|v_{j, \alpha}(\zeta)|^2\ \chi_{j, \alpha}^{\pm}(\zeta)f(\zeta)\overline{g(\zeta)} \la P_{j, \alpha}h, h' \ra .
			\end{align}
		Consequently, we have by (\ref{eqn_11}) that 
		\begin{align}\label{eqn_12} 
				\la M_j^*M_j(Q_{j, \alpha}^{+} \otimes P_{j, \alpha})(f \otimes h), (g \otimes h') \ra_{\mathcal{K}}
				&=\frac{1}{2\pi}\underset{\T}{\int} \la M_j^*M_j(Q_{j, \alpha}^{+} \otimes P_{j, \alpha})(f \otimes h)(\zeta), (g \otimes h')(\zeta) \ra d\zeta  \notag \\
				&=\frac{1}{2\pi}\underset{\T}{\int} |v_{j, \alpha}(\zeta)|^2\ \chi_{j, \alpha}^{+}(\zeta)f(\zeta)\overline{g(\zeta)} \la P_{j, \alpha}h, h' \ra  d\zeta   \notag \\
				&=\frac{r^2}{2\pi}\underset{\T_{j, \alpha}^+}{\int}f(\zeta)\overline{g(\zeta)} \la P_{j, \alpha}h, h' \ra  d\zeta  \qquad [\text{by} \ (\ref{eqn_06})]  \notag \\
				&=r^2\left[\frac{1}{2\pi}\underset{\T_{j, \alpha}^+}{\int}f(\zeta)\overline{g(\zeta)} \la P_{j, \alpha}h, h' \ra  d\zeta\right]  \notag \\
				&=r^2\left[\frac{1}{2\pi}\underset{\T}{\int}\la \chi_{j, \alpha}^+(\zeta)f(\zeta), g(\zeta)\ra \la P_{j, \alpha}h, h' \ra  d\zeta\right]  \notag \\
				&=r^2\left[\frac{1}{2\pi}\underset{\T}{\int}\la \chi_{j, \alpha}^+(\zeta)f(\zeta) \otimes P_{j, \alpha}h, \  g(\zeta)\otimes h' \ra d\zeta\right]  \notag \\
				&=r^2\la (Q_{j, \alpha}^{+} \otimes P_{j, \alpha})(f \otimes h), (g \otimes h')  \ra_{\mathcal{K}}.
			\end{align}
		Following the similar arguments as above, one can easily show that
		\begin{equation}\label{eqn_13}
			\begin{split}
				\la M_j^*M_j(Q_{j, \alpha}^{-} \otimes P_{j, \alpha})(f \otimes h), (g \otimes h') \ra_{\mathcal{K}}=\la (Q_{j, \alpha}^{-} \otimes P_{j, \alpha})(f \otimes h), (g \otimes h')  \ra_{\mathcal{K}}. \\
			\end{split} 
		\end{equation}
		For $1 \leq j \leq d$, let us define 
		\[
		Q_j^+=\overset{m_j}{\underset{\alpha=1}{\sum}} Q_{j, \alpha}^+ \otimes P_{j, \alpha} \qquad \text{and} \qquad Q_j^-=\overset{m_j}{\underset{\alpha=1}{\sum}} Q_{j, \alpha}^- \otimes P_{j, \alpha}.
		\]
	Let $f \in L^2(\T), \zeta \in \T$ and let $h \in \HS$. Then
	\begin{equation*}
		\begin{split}
Q_j^+ Q_j^-(f \otimes h)(\zeta)
&=\overset{m_j}{\underset{\alpha=1}{\sum}}  Q_j^+ (Q_{j, \alpha}^- \otimes P_{j, \alpha})(f \otimes h)(\zeta)	\\
&=\overset{m_j}{\underset{\alpha=1}{\sum}}  Q_j^+ (\chi_{j, \alpha}^-(\zeta)f(\zeta) \otimes P_{j, \alpha}h)	\\
&=\overset{m_j}{\underset{\alpha=1}{\sum}} \ \overset{m_j}{\underset{\beta=1}{\sum}} (Q_{j, \beta}^+ \otimes P_{j, \beta}) (\chi_{j, \alpha}^-(\zeta)f(\zeta) \otimes P_{j, \alpha}h)\\
&=\overset{m_j}{\underset{\alpha=1}{\sum}} \overset{m_j}{\underset{\beta=1}{\sum}} \chi_{j, \beta}^+(\zeta) \chi_{j, \alpha}^-(\zeta)f(\zeta) \otimes P_{j, \beta}P_{j, \alpha}h\\
&=\overset{m_j}{\underset{\alpha=1}{\sum}}\chi_{j, \alpha}^+(\zeta) \chi_{j, \alpha}^-(\zeta)f(\zeta) \otimes P_{j, \alpha}h \quad \left[ \text{as} \ \{P_{j, \beta}\}_{\beta=1}^{m_j} \ \text{are pairwise orthogonal}\right]\\
&=0,
		\end{split}
	\end{equation*}
where the last inequality follows from the fact that $\chi_{j, \alpha}^+(\zeta) \chi_{j, \alpha}^-(\zeta)=0$ for all $\zeta \in \T$ except at finitely many points. Similarly, one can show that $Q_j^+, Q_j^-$ are projections. We have by (\ref{eqn_09}) that $Q_j^+ + Q_j^-=I_{\mathcal{K}}$. Thus, $Q_j^+ \mathcal{K}$ and $Q_j^- \mathcal{K}$ are orthogonal subspaces of $\mathcal{K}$ and $\mathcal{K}=\overline{Q_j^+\mathcal{K}} \oplus \overline{Q_j^- \mathcal{K}}$. We have by (\ref{eqn_12}) and (\ref{eqn_13}) that $M_j^*M_jQ_j^+=r^2Q_j^+$ and $M_j^*M_jQ_j^-=Q_j^-$ respectively. We can write
		\[
		M_j^*M_j=\begin{bmatrix}
			r^2I & 0 \\
			0 & I
		\end{bmatrix}
		\]
		with respect to  $\mathcal{K}=\overline{Q_j^+\mathcal{K}} \oplus \overline{Q_j^- \mathcal{K}}$ and so, $M_j^*M_j+r^2(M_j^*M_j)^{-1}=(1+r^2)I_{\mathcal{K}}$. It follows from Proposition \ref{prop_QSA} that $M_j \in \mathcal{C}_{1, r}$ for $1 \leq j \leq d$.  Define $V: \HS \to \mathcal{K}$ by $Vh=1\otimes h$ and so, $V$ is an isometry.  Let $h, h' \in \HS, \zeta \in \T$ and let $n=(n_1, \dotsc, n_d) \in \mathbb{Z}^d$.  Then
		\begin{align}\label{eqn_14}
				\la M_1^{n_1}\dotsc M_d^{n_d}Vh(\zeta), Vh'(\zeta) \ra 
				&= \la M_1^{n_1}\dotsc M_d^{n_d}(1\otimes h)(\zeta), (1\otimes h')(\zeta) \ra \notag \\
				&= \la F_1^{n_1}(\zeta)\dotsc F_d^{n_d}(\zeta)(1 \otimes h), 1\otimes h' \ra \notag \\
				&= \la 1 \otimes F_1^{n_1}(\zeta)\dotsc F_d^{n_d}(\zeta)h, 1 \otimes h' \ra \notag \\
				&= \la F_1^{n_1}(\zeta)\dotsc F_d^{n_d}(\zeta)h, h' \ra \notag \\
				&= \lim_{s \to 1^-} \la F_1^{n_1}(s\zeta)\dotsc F_d^{n_d}(s\zeta)h,h' \ra.
			\end{align}
		Note that the map
		$
		z \mapsto \la F_1^{n_1}(z)\dotsc F_d^{n_d}(z)h,h' \ra
		$
		is a holomorphic map on $\D$. An application of dominated convergence theorem gives the following:
		\begin{equation*}
			\begin{split}
				\la M_1^{n_1}\dotsc M_d^{n_d}Vh, Vh' \ra
				&= \frac{1}{2\pi} \underset{\T}{\int}\la M_1^{n_1}\dotsc M_d^{n_d}Vh(\zeta), Vh'(\zeta) \ra  d\zeta \\
				&= \frac{1}{2\pi} \underset{\T}{\int} \lim_{s \to 1^-} \la  F_1^{n_1}(s\zeta)\dotsc F_d^{n_d}(s\zeta)h,h' \ra d\zeta \quad [\text{by} \ (\ref{eqn_14})] \\
				&= \lim_{s \to 1^-} \left[\frac{1}{2\pi} \underset{\T}{\int}  \la  F_1^{n_1}(s\zeta)\dotsc F_d^{n_d}(s\zeta)h,h' \ra d\zeta \right] \\
				&=\la  F_1^{n_1}(0)\dotsc F_d^{n_d}(0)h,h' \ra \qquad [\text{by Cauchy's integral formula}] \\
				&=\la  T_1^{n_1}\dotsc T_d^{n_d}h,h' \ra,
			\end{split}
		\end{equation*}
		where the last equality follows from (\ref{eqn_08}). Hence, $V^* M_1^{n_1}\dotsc M_d^{n_d}V=T_1^{n_1}\dotsc T_d^{n_d}$ for all integers $n_1, \dotsc, n_d$ which completes the proof.
	\end{proof}
	
To this end, we mention that following the proof of Theorem \ref{thm_main}, each $M_j$ in the proof of Theorem \ref{thm_main} becomes a normal operator if $(T_1, \dotsc, T_d)$ consists of normal operators. Now, we present the main result of this Section.

\medskip

	\noindent \textbf{Proof of Theorem \ref{thm_mainII}.} 
	$(1) \implies (2)$. Let $\underline{T}=(T_1, \dotsc, T_d)$ be a doubly commuting tuple of operators in $C_{1,r}$.  Consider the polar decomposition $T_j=U_j(T_j^*T_j)^{1\slash 2}$ for $j=1, \dotsc, d$. Let $D_j=(T_j^*T_j)^{1\slash 2}$ and let $\underline{D}=(D_1, \dotsc, D_d)$. The invertibility of each $D_j$ follows from the invertibility of $T_j$. Proceeding as in Theorem \ref{thm_main}, we have from the doubly commutativity hypothesis that $D_i^2D_j^2=D_j^2D_i^2$ and $T_iD_j^2=D_j^2T_i$. By spectral theorem, we have that $D_iD_j=D_jD_i$ and $T_iD_j=D_jT_i$. Consequently, we have
	\begin{equation*}
		T_iD_j=D_jT_i, \quad 	D_iD_j=D_jD_i, \quad U_iU_j=U_jU_i \quad \text{and} \quad U_iD_j=D_jU_i \quad (i \ne j).
	\end{equation*} 
Again, it follows from spectral theorem that there exists a unique spectral measure $E$ supported at $\Omega=\sigma_T(\underline{D}) \subseteq [r, 1]^d$ such that
	\[
	D_j=\underset{\Omega}{\int}\lambda_j \ dE(\lambda)=\underset{\sigma(D_j)}{\int}z \ dE_j(z) \qquad (j=1, \dotsc, d),
	\]
	where $E_j$ is the associated unique spectral measure relative to $(\sigma(D_j), \HS)$ (which can be recaptured from the spectral measure $E$) and $\lambda_j: \sigma_T(\underline{D}) \to \sigma(D_j)$ is the projection map onto the $j$-th coordinate. Choose $m \in \mathbb{N}$ and pick a simple measurable function $s_{m, j}$ taking values in $(r, 1)$ that approximates the map $z \mapsto z$ uniformly within $1\slash 2^m$ on $[r, 1]$, i.e., 
	\begin{equation}\label{eqn_15}
		\|s_{m, j}-z\|_{\infty, [r,1]}=\sup_{z \in [r,1]}|s_{m, j}(z)-z| \leq \frac{1}{2^m} \qquad (1 \leq j \leq d).
	\end{equation}
For $m \in \mathbb{N}$ and  $1 \leq j \leq d$, we define
	$
	D_{m, j}=s_{m,j}(D_j)$ and $T_{m, j}=U_jD_{m, j}$.
	Since each $s_{m, j}$ is a simple function with values in $(r,1)$, the operator $D_{m, j}$ can be written as a linear sum of finitely many commuting orthogonal projections whose sum is identity and $\sigma(D_{m, j})$ is a finite subset of $(r,1)$. 
	Note that $\|T_{m,j}\|=\|D_{m, j}\|$ and $\|T_{m, j}^{-1}\|=\|D_{m, j}^{-1}\|$ because each $U_j$ is a unitary. An application of the spectral theorem (as is done to obtain \eqref{eqn_02}), one can easily prove that 
	\[
	U_kD_{m, j}=D_{m, j}U_k, \quad D_iD_{m, j}=D_{m, j}D_i, \quad D_{m, i}D_{m, j}=D_{m, j}D_{m, i} \quad \text{and} \quad T_{m, i}T_{m, j}=T_{m, j}T_{m, i}
	\]
	for all $1 \leq i, j, k \leq d$ with $k \ne j$. We have by spectral theorem that $\|D_{m, j}-D_j\| \leq \|s_{m,j}-z\|_{\infty, [r,1]}$ for $m \in \mathbb{N}$ and $1\leq j\leq d$. We now have by (\ref{eqn_15}) that
	\begin{equation}\label{eqn_16}
		\|T_{m, j}-T_j\|= \|U_j(D_{m, j}-D_j)\| \leq \|D_{m, j}-D_j\| \leq \|s_{m,j}-z\|_{\infty, [r,1]} \leq \frac{1}{2^m} \to 0 \quad \text{as} \ m \to \infty
	\end{equation}
	and similarly, we have
	\begin{align}\label{eqn_17}
			\|T_{m,j}^{-1}-T_j^{-1}\| \leq \|D_{m,j}^{-1}-D_j^{-1}\|
			&=\|D_{m,j}^{-1}D_j^{-1}(D_j-D_{m,j})\| \notag \\
			&\leq \|D_{m,j}^{-1}\|\ \|D_j^{-1}\|\|D_j-D_{m,j}\| \notag \\
			& \leq \frac{r^{-2}}{2^m} \to 0 \quad \text{as} \ \ m \to \infty.
		\end{align}
	The last inequality follows because $D_j, D_{m,j}$ are self-adjoint operators with spectrum inside $[r,1]$ and so, $\|D_j^{-1}\|, \|D_{m,j}^{-1}\| \leq 1\slash r$. Putting everything together, we have that $(T_{m, 1}, \dotsc, T_{m, d})$ is a doubly commuting tuple of operators in $\Q$ and $\sigma((T_{m, j}^*T_{m, j})^{1\slash 2})$ is a finite subset of $(r, 1)$ for every $m \in \mathbb{N}$ and $1 \leq j \leq d$. It follows from Theorem \ref{lem_main} that there exist a Hilbert space $\KS_m$, an isometry $V_m: \HS \to \KS_m$ and a doubly commuting tuple $(M_{m, 1}, \dotsc, M_{m, d})$ of operators in $\mathcal{C}_{1, r}$ acting on $\KS_m$ such that for all integers $n_1, \dotsc, n_d$, we have
	\begin{equation}\label{eqn_18}
		T_{m,1}^{n_1}\dotsc T_{m,d}^{n_d}=V_m^*M_{m,1}^{n_1}\dotsc M_{m,d}^{n_d}V_m.
	\end{equation}
	For $1 \leq i \leq d$, we define $M_i=\underset{m\in \mathbb{N}}{\bigoplus}M_{m, i}$ acting on the Hilbert space $\mathcal{K}$ given by
	\[
	\mathcal{K}=\underset{m\in \mathbb{N}}{\bigoplus}\mathcal{K}_{m}=\left\{(k_1, k_2, \dotsc)\ : \ k_m \in \mathcal{K}_m \ \text{and}  \ \overset{\infty}{\underset{m=1}{\sum}}\|k_m\|^2 <\infty  \right\}.
	\]
	In other words, $M_i(k_1, k_2, \dotsc, )=(M_{1, i}(k_1), M_{2, i}(k_2), \dotsc )$ for $(k_1, k_2, \dotsc) \in \mathcal{K}$ and $1 \leq i \leq d$. By definition, it follows that $\underline{M}=(M_1, \dotsc, M_d)$ is a doubly commuting tuple of operators in $\mathcal{C}_{1, r}$.
	
	\medskip 
	
	Let $\mathcal{A}$ be the closure of the self-adjoint subspace $\mathcal{A}_0$ of $\mathcal{B}(\KS)$, which is given by
	\[
	\mathcal{A}_0=\text{span}\left\{p(\underline{M}), q(\underline{M})^* : p, q \ \text{are Laurent polynomials in $d$-variables} \right\}.
	\]
By a Laurent polynomial in $d$-variables, we mean a polynomial in the variables $z_1, z_1^{-1}, \dotsc, z_d, z_d^{-1}$. Evidently, $\mathcal{A}_0$ is an operator system in $\mathcal{B}(\KS)$.	Define $\Phi:\mathcal{A}_0 \to \mathcal{B}(\HS)$ as $\Phi(p(\underline{M}))=p(\underline{T})$ and $\Phi(p(\underline{M})^*)=p(\underline{T})^*$ which is extended linearly to $\mathcal{A}_0$. We claim that $\Phi$ is a completely positive map for which it suffices to show that 
	\[
	\begin{bmatrix}
		p_{\alpha\beta}(\underline{M})+q_{\alpha\beta}(\underline{M})^*
	\end{bmatrix}_{\alpha, \beta=1}^N \geq 0 \implies
	\begin{bmatrix}
		p_{\alpha\beta}(\underline{T})+q_{\alpha\beta}(\underline{T})^*
	\end{bmatrix}_{\alpha, \beta=1}^N \geq 0
	\]
	for all $N \in \mathbb{N}$ and Laurent polynomials $p_{\alpha \beta}, q_{\alpha \beta}$. Here, 
	\[
	\begin{bmatrix}
		p_{\alpha\beta}(\underline{M})+q_{\alpha\beta}(\underline{M})^*
	\end{bmatrix}_{\alpha, \beta=1}^N, \begin{bmatrix}
	p_{\alpha\beta}(\underline{T})+q_{\alpha\beta}(\underline{T})^*
	\end{bmatrix}_{\alpha, \beta=1}^N \geq 0
	\] 
	indicate that the corresponding block matrices are positive semi-definite as operators on $\KS^N$ and $\HS^N$, respectively. It follows from (\ref{eqn_16}) and (\ref{eqn_17}) that 
	\[
	\lim_{m\to \infty}\|p(T_{m,1}, \dotsc, T_{m,d})- p(T_1, \dotsc, T_d)\| = 0
	\]
	for all Laurent polynomials $p$ in $d$-variables. Take $N \in \mathbb{N}$. Let $p_{\alpha \beta}$ and $q_{\alpha \beta}$ be Laurent polynomials in $d$-variables for all $1 \leq \alpha, \beta \leq N$. Consequently, we have that
	\begin{equation*}
		\begin{split}
			& \quad \quad 0 \leq  \begin{bmatrix}
				p_{\alpha\beta}(\underline{M})+q_{\alpha\beta}(\underline{M})^*
			\end{bmatrix}_{\alpha, \beta=1}^N
			= \bigoplus_{m \in \mathbb N}\begin{bmatrix}
				p_{\alpha\beta}(M_{m,1}, \dotsc, M_{m,d})+q_{\alpha\beta}(M_{m,1}, \dotsc, M_{m,d})^*
			\end{bmatrix}_{\alpha, \beta=1}^N\\
			& \implies 0 \leq \begin{bmatrix}
				p_{\alpha\beta}(M_{m,1}, \dotsc, M_{m,d})+q_{\alpha\beta}(M_{m,1}, \dotsc, M_{m,d})^*
			\end{bmatrix}_{\alpha, \beta=1}^N \quad \text{for all $m \in \mathbb{N}$}\\
			&\implies 0 \leq \begin{bmatrix}
				V_m^*p_{\alpha\beta}(M_{m,1}, \dotsc, M_{m,d})V_m+V_m^*q_{\alpha\beta}(M_{m,1}, \dotsc, M_{m,d})^*V_m
			\end{bmatrix}_{\alpha, \beta=1}^N \quad \text{for all $m \in \mathbb{N}$}\\
			&\implies 0 \leq \begin{bmatrix}
				p_{\alpha\beta}(T_{m,1}, \dotsc, T_{m,d})+q_{\alpha\beta}(T_{m,1}, \dotsc, T_{m,d})^*
			\end{bmatrix}_{\alpha, \beta=1}^N \quad \text{for all $m \in \mathbb{N}$} \quad [\text{by} \ (\ref{eqn_18})]\\
		\end{split}
	\end{equation*}
	and taking limit $m \to \infty$, we get that $  \begin{bmatrix}
		p_{\alpha\beta}(\underline{T})+q_{\alpha\beta}(\underline{T})^*
	\end{bmatrix}_{\alpha, \beta=1}^N \geq 0$. Thus $\Phi$ is a unital completely positive linear map. One can extend the map $\Phi$ continuously on $\mathcal{A}$ which is the closure of the operator system $\mathcal{A}_0$. By Arveson's extension theorem (see \cite{Arveson}), there is a completely positive linear map $\Psi: \mathcal{B}(\KS) \to \mathcal{B}(\HS)$ such that $\Phi=\Psi|_\mathcal{A}$. Clearly, $\Psi$ is unital. It follows from Stinespring's dilation theorem (see \cite{Stine}) that there exist a Hilbert space $\mathcal{L}$, an isometry $V: \mathcal{H} \to \mathcal{L}$ and a unital $*$-representation $\pi:\mathcal{B}(\KS) \to \mathcal{B}(\mathcal{L})$ such that  
	$V^*\pi(S)V=\Psi(S)$ for all $S \in \mathcal{B}(\KS)$. In particular, $V^*\pi(S)V=\Phi(S)$ for all $S \in \mathcal{A}$. Hence, for any Laurent polynomial $p$, we have that 
	\[
	p(\underline{T})=\Phi(p(\underline{M}))=V^*\pi(p(\underline{M}))V.
	\]
	Define $(J_1, \dotsc, J_d)=(\pi(M_1), \dotsc, \pi(M_d)) \in \mathcal{B}(\mathcal{L})^d$. Using the $*$-homomorphism property of $\pi$ and the fact that $(M_1, \dotsc, M_d)$ is a doubly commuting tuple of operators in $\mathcal{C}_{1, r}$, we have that $(J_1, \dotsc, J_d)$ is a doubly commuting tuple of operators in $\mathcal{C}_{1, r}$. Moreover, $T_1^{n_1} \dotsc T_d^{n_d}=V^* J_1^{n_1} \dotsc J_d^{n_d} V$ for all integers $n_1, \dotsc, n_d$. \smallskip 
	
\medskip 

\noindent $(2) \implies (1)$. Suppose $\underline{T}$ admits a dilation to a doubly commuting tuple $\underline{J}=(J_1, \dotsc, J_d)$ of invertible operators acting on a Hilbert space $\KS$ such that  $(1+r^2)I_{\mathcal K}-J_m^*J_m-r^2J_m^{-1}J_m^{-*}=0$ for $1 \leq m \leq d$. Let $m \in \{1, \dotsc, d\}$. Proposition \ref{prop_QSA} guarantees the existence of orthogonal projections $P_0, P_1$ on $\KS$ that sum to $I_\KS$ and $J_m^*J_m=P_0+r^{2}P_1$. Then $\|T_m\|^2 \leq \|J_m\|^2=\|J_m^*J_m\| \leq 1$ and $\|T_m^{-1}\|^2 \leq \|J_m^{-1}\|^2=\|J_m^{-1}J_m^{-*}\| \leq r^{-2}$. Hence, $T_m \in C_{1, r}$. The proof is now complete. \qed 
	
\vspace{0.2cm}

As an application of Theorem \ref{thm_mainII}, we now present the proof of Theorem \ref{thm_mainQAr}.

\vspace{0.2cm}

\noindent \textbf{Proof of Theorem \ref{thm_mainQAr}.}
$(1) \implies (2)$. Let $T_i \in QA_r$ for $1 \leq i \leq d$. We have by Lemma \ref{lem_basic} that $A_i=rT_i \in C_{1, r^2}$ and so, $\underline{A}=(A_1, \dotsc, A_d)$ is a doubly commuting tuple of operators in $C_{1, r^2}$ class. By Theorem \ref{thm_mainII}, there exist a Hilbert space $\KS$, an isometry $V: \HS \to \KS$ and a doubly commuting tuple $(S_1, \dotsc, S_d)$ of operators in $\mathcal{C}_{1, r^2}$ acting on $\KS$ such that 
\[
A_1^{n_1}\dotsc A_d^{n_d}=V^*S_1^{n_1}\dotsc S_d^{n_d}V \quad \text{and so,} \quad T_1^{n_1}\dotsc T_d^{n_d}=V^*(r^{-1}S_1)^{n_1}\dotsc (r^{-1}S_d)^{n_d}V
\]
for all integers $n_1, \dotsc, n_d$. Define the operator tuple $\underline{J}=(J_1, \dotsc, J_d)=(r^{-1}S_1, \dotsc, r^{-1}S_d)$. It is evident that $\underline{J}$ dilates $\underline{T}$. For $m \in \{1, \dotsc, d\}$, we have that
\begin{equation*}
	\begin{split}
	(r^{-2}+r^2)I_{\mathcal K}-J_m^*J_m-J_m^{-1}J_m^{-*}
	=	(r^{-2}+r^2)I_{\mathcal K}-r^{-2}S_m^*S_m-r^2S_m^{-1}S_m^{-*}
	=0,
	\end{split}
\end{equation*}
where the last equality follows from Proposition \ref{prop_QSA} and the fact that $S_m \in \mathcal{C}_{1, r^2}$ for $1 \leq m \leq d$. Consequently, we have the desired conclusion as in part $(2)$.

\medskip 

\noindent $(2) \implies (1)$. Suppose there exists a tuple $\underline{J}=(J_1, \dotsc, J_d)$ on a Hilbert space $\KS$ such that $\underline{J}$ dilates $\underline{T}$ and $(r^{-2}+r^2)I_{\mathcal K}-J_m^*J_m-J_m^{-1}J_m^{-*}=0$ for $1 \leq m \leq d$. Let $m \in \{1, \dotsc, d\}$. By Proposition \ref{prop_3.2_in_Mc}, there exist orthogonal projections $P_0, P_1$ that sum to the identity and $J_m^*J_m=r^2P_0+r^{-2}P_1$. Then
\[
\|T_m\|^2\leq \|J_m\|^2=\|J_m^*J_m\| \leq r^{-2}, \quad \|T_m^{-1}\|^2\leq\|J_m^{-1}\|^2=\|J_m^{-1}J_m^{-*}\| \leq r^{-2} 
\]
and so, $T_m \in QA_r$. The proof is now complete. \qed 

\vspace{0.2cm}

It is well-known that a commuting tuple of normal operators is doubly commuting. Consequently, the dilation results for doubly commuting tuples in the $C_{1,r}$ class and quantum annulus, namely Theorems \ref{thm_mainII} and \ref{thm_mainQAr}, also apply to commuting tuples of normal operators in these classes.
	
	\section{Decomposition of doubly commuting operators in $\Q$ and quantum annulus} \label{sec_decomp}

\vspace{0.2cm}

\noindent Recall that a contraction $T$ on a Hilbert space $\HS$ is said to be \textit{completely non-unitary} if there is no non-zero closed subspace of $\HS$ which reduces $T$ to a unitary. It is well-known that a contraction can be written as a direct sum of a unitary operator and a completely non-unitary contraction.  The result is referred to as the \textit{canonical decomposition of a contraction} (see CH-I in \cite{NagyFoias}). In this Section, we present an analog of this result for operators in $C_{1,r}$ class. To do so, we first define an analog of completely non-unitary contraction for operators in $C_{1, r}$ class.

\begin{defn}
	An operator $T \in \Q$ acting on a Hilbert space $\HS$ is said to be \textit{c.n.u. $\Q$}-contraction if $\HS$ has no non-zero closed subspace that reduces $T$ to an operator in $\mathcal{C}_{1, r}$. An operator $T \in QA_r$ acting on a Hilbert space $\HS$ is said to be \textit{c.n.u. $QA_r$}-contraction if $\HS$ has no non-zero closed subspace that reduces $T$ to an operator in $\mathcal{Q}A_r$. 
\end{defn}

We have by Lemma \ref{lem_basic} and Proposition \ref{prop_QSA} that an operator $T \in C_{1,r}$ is a c.n.u. $C_{1, r}$-contraction if and only if $r^{-1\slash 2}T$ (which must belong to $QA_r)$ is a c.n.u. $QA_r$-contraction. For this reason, one can easily construct c.n.u. $QA_r$-contractions from the c.n.u. $C_{1, r}$-contractions.  We now present a few examples of c.n.u. $C_{1, r}$-contractions.

\begin{eg}
	Let $n \in \mathbb{N}$ and $r \in (0,1)$. Consider the operator $T_n=r^{1\slash 2n}I_\HS$ acting on a Hilbert space $\HS$. Evidently, $(T_1, \dotsc, T_d)$ is a doubly commuting (in fact normal) tuple of operators in $\Q$ for any $d \in \mathbb{N}$. Note that
	$
	(1+r^2)I_\HS-T_n^*T_n-r^2T_n^{-1}T_n^{-*}=1+r^2-r^{1\slash n}-r^2r^{-1\slash n}=(1-r^{1\slash n})(1-r^2r^{-1\slash n})>0.
	$
	Hence, $(T_1, \dotsc, T_d)$ is a doubly commuting tuple of c.n.u. $\Q$-contraction.\qed
\end{eg}

The idea of the next example is due to Sarason \cite{Sarason}.	

\begin{eg}
	Consider the space $L^2(\partial\mathbb{A}_r)=L^2(\mathbb{T})\oplus L^2(r\mathbb{T})$	where $L^2(\mathbb{T})$ and $L^2(r\mathbb{T})$ are endowed with normalized Lebesgue measure and the Hilbert norm is given by
	\begin{equation*}
		\|f\|^2=\frac{1}{2\pi}\overset{2\pi}{\underset{0}{\int}}|f(e^{it})|^2dt+\frac{1}{2\pi}\overset{2\pi}{\underset{0}{\int}}|f(re^{it})|^2dt  .  
	\end{equation*}
	For every $0 \leq \alpha < 1,$ define the function $w_\alpha$ on $\partial \mathbb{A}_r$ as
	\[
	w_\alpha(e^{it})=e^{i\alpha t} \quad \text{and} \quad  w_\alpha(re^{it})=r^\alpha e^{i\alpha t} \quad \text{for}\ \ 0 \leq t < 2\pi.
	\]
	Define the operator $S$ on $L^2(\partial\mathbb{A}_r)$ by $S(f)(z)=zf(z)$. Then $Sw_{\alpha}=w_{\alpha+1}$ and $S^{-1}w_\alpha=w_{\alpha-1}$. The set $\{w_{\alpha+n}: n \in \mathbb{Z}\}$ is an orthogonal set in $L^2(\partial \mathbb{A}_r)$. Let $H_{\alpha}^2(\partial \mathbb{A}_r)$ be the span closure of $\{w_{\alpha+n} : n \in \mathbb{Z}\}$ in $L^2(\partial \mathbb{A}_r)$. Define
	\[
	e_n=\frac{w_{\alpha+n}}{\sqrt{1+r^{2(\alpha+n)}}} \quad \text{and} \quad 	\alpha_n=\sqrt{\frac{1+r^{2(\alpha+n+1)}}{1+r^{2(\alpha+n)}}}. 
	\]
	The set $\{e_n : n \in \mathbb Z\}$ forms an orthonormal basis of $H_{\alpha}^2(\partial \mathbb{A}_r)$. It is not difficult to see that
	\[
	Se_n=\alpha_ne_{n+1}, \quad S^{-1}e_n=\frac{1}{\alpha_{n-1}}e_{n-1}, \quad S^*e_n=\alpha_{n-1}e_{n-1} \quad \text{and} \quad S^{-*}e_n=\frac{1}{\alpha_{n}}e_{n+1}.
	\]
	Evidently, $H_{\alpha}^2(\partial \mathbb{A}_r)$ is invariant under $S$ and $S^{-1}$. Let $S_\alpha=S|_{H_{\alpha}^2(\partial \mathbb{A}_r)}$. Then
	\begin{equation}\label{eqn_egS}
		1+r^2-|\alpha_n|^2-r^2|\alpha_n|^{-2}=\frac{r^{2(\alpha+n)}(1-r^2)^2}{(1+r^{2(\alpha+n)})(1+r^{2(\alpha+n+1)})}>0.
	\end{equation}
	Take any $\displaystyle f=\overset{\infty}{\underset{-\infty}{\sum}}f_ne_n$ in $H_\alpha^2(\partial \A)$. Then
	\begin{equation*}
		\begin{split}
			(1+r^2)\|f\|^2-\|S_\alpha f\|^2-r^2\|S_\alpha^{-*}f\|^2
			&= \overset{\infty}{\underset{-\infty}{\sum}}\left((1+r^2)-|\alpha_n|^2-r^2|\alpha_n|^{-2}\right)|f_n|^2,
		\end{split}
	\end{equation*}
which	is always non-negative. Also, we have by (\ref{eqn_egS}) that $(1+r^2)\|f\|^2-\|S_\alpha f\|^2-r^2\|S_\alpha^{-*}f\|^2=0$ if and only if $f=0$. Consequently, $S_\alpha$ is a c.n.u. $\Q$-contraction.\qed
\end{eg}

\begin{thm}\label{thm_canon}$($Canonical decomposition of operators in $\Q).$
	Let $T$ be an operator in $\Q$ acting on a Hilbert space $\HS$. Then there exists a unique orthogonal decomposition of $\HS=\HS_1 \oplus \HS_2$ into closed subspaces $\HS_1 \oplus \HS_2$ reducing $T$ such that the following holds: 
	\begin{enumerate}
		\item $T|_{\HS_1} \in \mathcal{C}_{1, r}$;
		\item $T|_{\HS_2}$ is a c.n.u. $\Q$-contraction.
	\end{enumerate}
	Moreover,  $\HS_1$ is the maximal closed $T$ reducing subspace restricted to which $T \in \mathcal{C}_{1, r}$ and the subspace $\HS_1$ is given by 
	\[
	\bigcap_{k \in \mathbb{N}}\ \bigcap_{n, m \in (\mathbb{N} \cup \{0\})^k} \left\{x \in \HS :  (1+r^2)\|p_{k, nm}(T)x\|^2-\|Tp_{k, nm}(T)x\|^2-r^2\|T^{-*}p_{k, nm}(T)x\|^2  \right\}
	\]
	where 
	\[
	p_{k, nm}(T)=T^{n_1}T^{*m_1}\dotsc T^{n_k}T^{*m_k} \quad \text{for} \quad n=(n_1, \dotsc, n_k), m=(m_1, \dotsc, m_k) \in (\mathbb{N}\cup \{0\})^k, k \in \mathbb{N}.
	\]
	
\end{thm}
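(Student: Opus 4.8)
The plan is to follow the Sz.-Nagy--Foias canonical decomposition scheme, with the ``unitary part'' replaced by a ``$\mathcal{C}_{1,r}$ part''. Throughout write $D_T=(1+r^2)I_\HS-T^*T-r^2T^{-1}T^{-*}$; since $T\in\Q$ we have $D_T\ge 0$, and by Proposition~\ref{prop_QSA} an invertible operator $S$ lies in $\mathcal{C}_{1,r}$ precisely when $D_S=0$. I would first record two elementary facts about a closed subspace $\mathcal M$ reducing $T$: because $P_{\mathcal M}$ commutes with $T$ it commutes with $T^{-1}$ as well, so $T|_{\mathcal M}$ is invertible on $\mathcal M$ with inverse $T^{-1}|_{\mathcal M}$; consequently $D_{T|_{\mathcal M}}=D_T|_{\mathcal M}$, and hence $T|_{\mathcal M}\in\mathcal{C}_{1,r}$ if and only if $D_T$ annihilates $\mathcal M$. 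Next, reading the displayed set as the $x$ for which the expression equals $0$ and noting that, for $y=p_{k,nm}(T)x$,
\[
(1+r^2)\|y\|^2-\|Ty\|^2-r^2\|T^{-*}y\|^2=\la D_T y,y\ra,
\]
which (as $D_T\ge 0$) vanishes iff $D_T y=0$, the displayed subspace equals $\HS_1=\bigcap_{w\in\mathcal S}\ker(D_T w)$, where $\mathcal S$ is the unital multiplicative semigroup generated by $T$ and $T^*$: each $p_{k,nm}(T)$ is a word in $T,T^*$, and conversely every such word is some $p_{k,nm}(T)$, with $p_{1,(0),(0)}(T)=I_\HS$. In particular $\HS_1$ is closed, being an intersection of kernels of bounded operators.

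I would then verify the three properties that pin $\HS_1$ down. \emph{$\HS_1$ reduces $T$:} for $x\in\HS_1$ and $w\in\mathcal S$, $D_T w(Tx)=D_T(wT)x=0$ and $D_T w(T^*x)=D_T(wT^*)x=0$ since $wT,wT^*\in\mathcal S$; so $\HS_1$ is invariant under $T$ and $T^*$. \emph{Property (1):} taking $w=I_\HS$ gives $D_T x=0$ for every $x\in\HS_1$, so $D_T|_{\HS_1}=0$ and thus $T|_{\HS_1}\in\mathcal{C}_{1,r}$. \emph{Maximality:} if $\mathcal M$ is any closed $T$-reducing subspace with $T|_{\mathcal M}\in\mathcal{C}_{1,r}$, then $D_T$ annihilates $\mathcal M$, and $\mathcal M$ is invariant under each $w\in\mathcal S$, so $D_T w x=0$ for all $x\in\mathcal M$, $w\in\mathcal S$, i.e.\ $\mathcal M\subseteq\HS_1$; this shows $\HS_1$ is the largest reducing subspace on which $T\in\mathcal{C}_{1,r}$.

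Finally, put $\HS_2=\HS\ominus\HS_1$, which again reduces $T$; then $T|_{\HS_2}\in\Q$ because the restriction of $D_T\ge 0$ to a reducing subspace stays positive and invertibility is inherited. If a nonzero closed $\mathcal N\subseteq\HS_2$ reduced $T|_{\HS_2}$ to an operator in $\mathcal{C}_{1,r}$, then $\mathcal N$ would reduce $T$ with $T|_{\mathcal N}\in\mathcal{C}_{1,r}$, hence $\mathcal N\subseteq\HS_1\cap\HS_2=\{0\}$ by maximality; so $T|_{\HS_2}$ is a c.n.u.\ $\Q$-contraction, which is (2). For uniqueness, if $\HS=\HS_1'\oplus\HS_2'$ is another such decomposition, maximality gives $\HS_1'\subseteq\HS_1$, and $\mathcal N:=\HS_1\ominus\HS_1'\subseteq\HS_2'$ reduces $T$ with $D_T|_{\mathcal N}=0$ (as $\mathcal N\subseteq\HS_1$), so $T|_{\mathcal N}\in\mathcal{C}_{1,r}$; since $T|_{\HS_2'}$ is c.n.u.\ this forces $\mathcal N=\{0\}$, whence $\HS_1'=\HS_1$ and $\HS_2'=\HS_2$. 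The only steps that need genuine care — and what I would call the hard part — are the clean identification of the displayed set with $\bigcap_{w\in\mathcal S}\ker(D_T w)$ together with the observation that ``$\mathcal M$ reduces $T$'' means exactly ``$\mathcal M$ is invariant under $T$ \emph{and} $T^*$''; this is why closing an orbit under the semigroup generated by $T,T^*$ already detects the defect operator, with no need to also invoke $T^{-1},T^{-*}$. Everything after that is the standard maximality/c.n.u.\ bookkeeping familiar from the contraction case.
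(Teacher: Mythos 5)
Your proof is correct and follows essentially the same route as the paper: you take $\HS_1$ to be the common kernel of $D_Tw$ over all words $w$ in $T,T^*$, which (by positivity of $D_T$) is exactly the paper's set $\{x:\delta_{k,nm}(T)(x)=0\}$, and then the reducing/maximality/c.n.u./uniqueness steps coincide with the paper's. The only cosmetic difference is that you pass to the operator identity $D_T p_{k,nm}(T)x=0$ up front, whereas the paper works with the scalar quantities $\delta_{k,nm}$ in the theorem and makes that kernel reformulation explicit only in the subsequent Lemma~\ref{lem_reduce}.
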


\begin{proof}
	For $x \in \HS, k \in \mathbb{N}$ and $n, m \in (\mathbb{N} \cup \{0\})^k$, we define 
	\[
	\delta_{k, nm}(T)(x)=(1+r^2)\|p_{k, nm}(T)x\|^2-\|Tp_{k, nm}(T)x\|^2-r^2\|T^{-*}p_{k, nm}(T)x\|^2
	\]
	and then consider
	\[
	\HS_1=\left\{x \in \HS : \delta_{k, nm}(T)(x)=0 \ \ \text{for} \ n, m \in (\mathbb{N} \cup \{0\})^k, k \in \mathbb{N} \right\}.
	\] 
	We show that $\HS_1$ is a reducing subspace for $T$. Let $x \in \HS_1, k \in \mathbb{N}$ and $n, m \in (\mathbb{N} \cup \{0\})^k$. Then 
	\[
	\delta_{k, nm}(T)(Tx)=\delta_{k+1, ij}(T)(x)=0 \qquad \text{and} \qquad \delta_{k, nm}(T)(T^*x)=\delta_{k+1, \alpha\beta}(T)(x)=0
	\]
	for $i=(n_1, \dotsc, n_k, 1), j=(m_1, \dotsc, m_k, 0), \alpha=(n_1, \dotsc, n_k, 0)$ and $\beta=(m_1, \dotsc, m_k, 1)$. Consequently, $Tx, T^*x \in \HS_1$ and so, $\HS_1$ is a closed reducing subspace for $T$. Let $x \in \HS_1$. For $k=1, n=0$ and $m=1$, we have that $p_{k, nm}(T)=T^*$. Then
	\[
	\delta_{k, nm}(T)(x)=(1+r^2)\|T^*x\|^2-\|TT^*x\|^2-r^2\|x\|^2=0
	\]
	and so, $T|_{\HS_1}$ is in $\mathcal{C}_{1, r}$. Let $\mathcal{L} \subseteq \HS$ be a closed reducing subspace for $T$ such that $T|_\mathcal{L}$ is in $\mathcal{C}_{1, r}$. Then $x_{k, nm}=p_{k, nm}(T)x \in \mathcal{L}$ for all $x \in \mathcal{L}, k \in \mathbb{N}$ and $n, m \in (\mathbb{N} \cup \{0\})^k$. Thus $0=(1+r^2)\|x_{k, nm}\|^2-\|Tx_{k, nm}\|^2-r^2\|T^{-*}x_{k, nm}\|^2=\delta_{k, nm}(T)(x)$ and so, $x \in \HS_1$. Hence, $\HS_1$ is the maximal closed $T$ reducing subspace restricted to which $T \in \mathcal{C}_{1, r}$. Define $\HS_2=\HS \ominus \HS_1$. It is evident that $\HS_2$ reduces $T$ and is a closed subspace of $\HS$. If $\HS_2$ has a closed reducing subspace $\mathcal{L}$ restricted to which $T \in \mathcal{C}_{1, r}$, then $\mathcal{L} \subseteq \HS_1 \cap \HS_2=\{0\}$. Hence, $T|_{\HS_2}$ is a c.n.u. $\Q$-contraction. The uniqueness of the decomposition follows again from the maximality of $\HS_1$. Indeed, if $\HS=\mathcal{L}_1 \oplus \mathcal{L}_2$ is another decomposition such that $(1)$ and $(2)$ of the statement hold, then by maximality of $\HS_1$, we have that $\mathcal{L}_1 \subseteq \mathcal{H}_1$. The space $\HS_1 \ominus \mathcal{L}_1$ reduces $T$ to an operator in $ \mathcal{C}_{1, r}$, because $\HS_1$ and $\mathcal{L}$ reduce $T$ to operators in $\mathcal{C}_{1, r}$. Also, $\HS_1 \ominus \mathcal{L}_1 \subseteq \HS \ominus \mathcal{L}_1=\mathcal{L}_2$. Since $T|_{\mathcal{L}_2}$ is a c.n.u. $\Q$-contraction, $\HS_1 \ominus \mathcal{L}_1=\{0\}$, i.e., $\HS_1= \mathcal{L}_1$ which completes the uniqueness part. 
\end{proof}

We now extend the decomposition result from Theorem \ref{thm_canon} to any tuple of doubly commuting operators in the $C_{1,r}$ class. The next lemma is essential for proving this result.

\begin{lem}\label{lem_reduce}
	For a pair of doubly commuting operators $A$ and $B$ in $\Q$ acting on a Hilbert space $\HS$, if $A=A_1 \oplus A_2$ is the canonical decomposition of $A$ $($as in Theorem \ref{thm_canon}$)$ with respect to the decomposition $\HS=\HS_1 \oplus \HS_2$, then $\HS_1, \HS_2$ are reducing subspaces for $B$.
\end{lem}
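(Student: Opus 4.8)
The plan is to show that $\HS_1$ is invariant under both $B$ and $B^*$; once that is done, $\HS_2=\HS\ominus\HS_1$, being the orthogonal complement of a subspace that reduces $B$, automatically reduces $B$ as well, and the lemma follows.

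First I would reformulate the defining condition of $\HS_1$ from Theorem \ref{thm_canon} in terms of a single positive operator. Set $G=(1+r^2)I_\HS-A^*A-r^2A^{-1}A^{-*}$; since $A\in\Q$ we have $G\geq 0$. For any $w\in\HS$,
\[
\langle Gw,w\rangle=(1+r^2)\|w\|^2-\|Aw\|^2-r^2\|A^{-*}w\|^2,
\]
so taking $w=p_{k,nm}(A)x$ this is precisely $\delta_{k,nm}(A)(x)$ as defined in the proof of Theorem \ref{thm_canon}. Because $G\geq0$, the scalar condition $\langle Gw,w\rangle=0$ is equivalent to the operator condition $Gw=0$; hence
\[
\HS_1=\bigl\{x\in\HS:\ G\,p_{k,nm}(A)x=0\ \text{ for all }k\in\mathbb N,\ n,m\in(\mathbb N\cup\{0\})^k\bigr\}.
\]

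Next I would record the commutation relations furnished by double commutativity. From $AB=BA$ and $A^*B=BA^*$ together with the invertibility of $A$, one gets that $B$ commutes with each of $A$, $A^{-1}$, $A^*$, $A^{-*}$, hence with $A^*A$ and with $A^{-1}A^{-*}$, and therefore with $G$. Taking adjoints of $BA=AB$ and $BA^*=A^*B$ yields $B^*A^*=A^*B^*$ and $B^*A=AB^*$, so $B^*$ likewise commutes with $G$ (equivalently, $B^*G=GB^*$ follows from $BG=GB$ and $G=G^*$). Moreover $B$ and $B^*$ commute with every word $p_{k,nm}(A)$ in $A$ and $A^*$.

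Finally I would verify invariance. Let $x\in\HS_1$. For any admissible $k,n,m$, since $B$ commutes with $p_{k,nm}(A)$ and with $G$,
\[
G\,p_{k,nm}(A)\,(Bx)=GB\,p_{k,nm}(A)x=BG\,p_{k,nm}(A)x=0,
\]
so $Bx\in\HS_1$, and the identical computation with $B^*$ in place of $B$ gives $B^*x\in\HS_1$. Thus $\HS_1$ reduces $B$, and consequently $\HS_2=\HS\ominus\HS_1$ reduces $B$ as well. I do not anticipate a real obstacle here; the only point that needs a word of care is that, by positivity of $G$, the quadratic condition $\langle Gw,w\rangle=0$ collapses to the linear condition $Gw=0$, which is exactly what allows the commutation of $B$ with $G$ to pass through the description of $\HS_1$. (If one prefers to avoid this, one can instead note that $B$ commutes with $G^{1/2}$ by the continuous functional calculus applied to $BG=GB$, and run the same argument with the conditions $G^{1/2}p_{k,nm}(A)x=0$.)
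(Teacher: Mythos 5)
Your proof is correct and follows essentially the same route as the paper: both convert the scalar condition $\delta_{k,nm}(A)(x)=0$ into an operator kernel condition via positivity and then pass $B$ and $B^*$ through using the double commutativity. The only cosmetic difference is that you work with the single defect operator $G$ and the conditions $G\,p_{k,nm}(A)x=0$, whereas the paper uses the family of positive operators $\Delta_{k,nm}(A)=p_{k,nm}(A)^*G\,p_{k,nm}(A)$ and the intersection of their kernels, which describes the same subspace $\HS_1$.
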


\begin{proof}
	Following the proof of Theorem \ref{thm_canon}, we have that
	\[
	\HS_1=\left\{x \in \HS : \delta_{k, nm}(A)(x)=0 \quad \text{for} \ n, m \in (\mathbb{N} \cup \{0\})^k, k \in \mathbb{N} \right\},
	\]
	where 
	\[
	\delta_{k, nm}(A)(x)=(1+r^2)\|p_{k, nm}(A)x\|^2-\|Ap_{k, nm}(A)x\|^2-r^2\|A^{-*}p_{k, nm}(A)x\|^2.
	\]
	For $k \in \mathbb{N}$ and $n, m \in (\mathbb{N} \cup \{0\})^k$, we define 
	\begin{equation*}
		\Delta_{k, nm}(A)=(1+r^2)p_{k, nm}(A)^*p_{k, nm}(A)-(Ap_{k, nm}(A))^*Ap_{k, nm}(A)-r^2(A^{-*}p_{k, nm}(A))^*A^{-*}p_{k, nm}(A).
	\end{equation*}
	Let $x \in \HS, k \in \mathbb{N}$ and $n, m \in (\mathbb{N} \cup \{0\})^k$. Since $A \in \Q$ and $p_{k, nm}(A)x \in \HS$, we have that 
	\begin{equation*}
		\delta_{k, nm}(A)(x)=(1+r^2)\|p_{k, nm}(A)x\|^2-\|Ap_{k, nm}(A)x\|^2-r^2\|A^{-*}p_{k, nm}(A)x\|^2 \geq 0.
	\end{equation*}
	It is easy to see that $\la \Delta_{k, nm}(A)x, x \ra =\delta_{k, nm}(A)x \geq 0$. Thus, $\Delta_{k, nm}(A)$ is a positive operator and hence, admits a unique positive square root denoted by $\Delta_{k, nm}(A)^{1\slash 2}$. We now define
	\[
	\HS_1'=\bigcap_{\substack{ k \in \mathbb{N} \\ n, m \in (\mathbb{N}\cup \{0\})^k}}  Ker \ \Delta_{k, nm}(A).
	\]
	We claim that $\HS_1=\HS_1'$. Let $x \in \HS_1', k \in \mathbb{N}$ and $n, m \in (\mathbb{N} \cup \{0\})^k$. Note that 
	$
	\delta_{k, nm}(A)x=\la \Delta_{k, nm}(A)x, x \ra=\|\Delta_{k, nm}(A)^{1\slash 2}x\|^2
	$
	and so, $\delta_{k, nm}(A)x=0$ if and only if $\Delta_{k, nm}(A)x=0$. Therefore, $\HS_1=\HS_1'$. Since $A$ and $B$ doubly commute, each $\Delta_{k, nm}(A)$ commute with $B$ and $B^*$. Then
	\[
	\Delta_{k,nm}(A)Bx=B\Delta_{k,nm}(A)x=0 \quad \text{and} \quad \Delta_{k,nm}(A)B^*x=B^*\Delta_{k,nm}(A)x=0
	\]
	for all $k \in \mathbb{N}, n, m \in (\mathbb{N} \cup \{0\})^k$ and $x \in \HS_1$. Thus $Bx, B^*x \in \HS_1$ and so, $\HS_1$ reduces $B$. Consequently, $\HS_2=\HS \ominus \HS_1$ also reduces $B$  which completes the proof.
\end{proof}

We now present a decomposition result for a tuple of doubly commuting operators in $\Q$ class. To do so, we shall adopt the following terminologies for the rest of the article.

\begin{defn}
	An operator $T \in \Q$ acting on a Hilbert space $\HS$ is said to be of \textit{type $t_1$} if $T$ is in $\mathcal{C}_{1, r}$ and of \textit{type $t_2$} if $T$ is a c.n.u. $\Q$-contraction. 
\end{defn}

\begin{thm}\label{thm_dc_C1r}
	Let $(T_1, \dotsc, T_d)$ be a doubly commuting tuple of operators in $\Q$ acting on a Hilbert space $\HS$. Then there exists a decomposition of $\HS$ into an orthogonal sum of $2^d$ subspaces $\HS_1, \dotsc, \HS_{2^d}$ of $\HS$ such that
	\begin{enumerate}
		\item every $\HS_j
		, 1 \leq j \leq 2^d$, is a joint reducing subspace for $T_1, \dotsc, T_d$ ;
		\item $T_i|_{\HS_j}$ is either in $\mathcal{C}_{1, r}$ or a c.n.u. $\Q$-contraction for $ 1 \leq i \leq d$ and $1 \leq j \leq 2^d$;
		\item if $\Omega_d$ is the collection of all functions $\omega:\{1, \dotsc, d\} \to \{t_1, t_2\}$, then corresponding to every $\omega \in \Omega_d$, there is a unique subspace $\HS_\ell (1 \leq \ell \leq 2^d)$ such that the $j$-th component of $(T_1|_{\HS_{\ell}}, \dotsc, T_d|_{\HS_{\ell}})$ is of the type $\omega(j)$ for $1 \leq j \leq d$;
		\item $\HS_1$ is the maximal joint reducing subspace for $T_1, \dotsc, T_d$ such that $T_1|_{\HS_1}, \dotsc, T_d|_{\HS_1}$ are in $\mathcal{C}_{1, r}$;
		\item $\HS_{2^d}$ is the maximal joint reducing subspace for $T_1, \dotsc, T_d$ such that $T_1|_{\HS_{2^d}}, \dotsc, T_d|_{\HS_{2^d}}$ are c.n.u. $\Q$-contractions.
	\end{enumerate}
	The decomposition is uniquely determined. One or more members of $\{\HS_j : 1 \leq j \leq 2^d\}$ may equal the trivial subspace $\{0\}$.
\end{thm}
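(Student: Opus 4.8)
The plan is to prove the theorem by induction on $d$, peeling off one operator at a time and invoking the single-operator canonical decomposition (Theorem \ref{thm_canon}) together with the compatibility Lemma \ref{lem_reduce}. Three routine facts will be used repeatedly. First, if $\HS'$ is a closed subspace reducing an operator $S \in \mathcal{C}_{1, r}$, then $S|_{\HS'} \in \mathcal{C}_{1, r}$: this is immediate from Proposition \ref{prop_QSA}, since the operator identity $(1+r^2)I - S^*S - r^2 S^{-1}S^{-*}=0$ compresses to $\HS'$ and $(S|_{\HS'})^{-1} = S^{-1}|_{\HS'}$. Second, if $S \in \Q$ is a c.n.u. $\Q$-contraction and $\HS'$ reduces $S$, then $S|_{\HS'}$ is a c.n.u. $\Q$-contraction, because a closed subspace of $\HS'$ reducing $S|_{\HS'}$ to an operator in $\mathcal{C}_{1, r}$ would also reduce $S$, contradicting the hypothesis. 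Third, the restriction of a doubly commuting tuple in $\Q$ to a joint reducing subspace is again a doubly commuting tuple in $\Q$.

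For the existence part (items (1)--(3)), I would iterate as follows. Apply Theorem \ref{thm_canon} to $T_1$ to write $\HS = \HS_1^{(1)} \oplus \HS_2^{(1)}$, with $T_1$ of type $t_1$ on $\HS_1^{(1)}$ and of type $t_2$ on $\HS_2^{(1)}$; by Lemma \ref{lem_reduce} applied to each pair $(T_1, T_j)$, both summands reduce every $T_j$, and hence carry doubly commuting $d$-tuples in $\Q$. Now repeat inside each summand using $T_2$, then $T_3$, and so on, at each step applying Theorem \ref{thm_canon} to the restriction of the next operator and Lemma \ref{lem_reduce} (within the current subspace) to see that the two new pieces still reduce the remaining operators. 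After the $k$-th round one has $2^k$ mutually orthogonal subspaces summing to $\HS$, each a joint reducing subspace for $T_1, \dots, T_d$, on each of which $T_1, \dots, T_k$ have fixed types: the types of $T_1, \dots, T_{k-1}$ persist under restriction by the first two facts above, while the type of $T_k$ is pinned by Theorem \ref{thm_canon}. After $d$ rounds one obtains $2^d$ joint reducing subspaces indexed by $\omega \in \Omega_d$ (recording the type of each $T_i$); relabel them $\HS_1, \dots, \HS_{2^d}$ with $\HS_1$ for $\omega \equiv t_1$ and $\HS_{2^d}$ for $\omega \equiv t_2$. Some may be $\{0\}$. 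This proves (1)--(3).

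It remains to identify the extreme subspaces and to establish uniqueness. Let $\HS = \HS_1^{(i)} \oplus \HS_2^{(i)}$ denote the canonical decomposition of $T_i$; by Lemma \ref{lem_reduce} each summand reduces all of $T_1, \dots, T_d$. The basic observation is that the $t_1$-part of $T_{k+1}$ restricted to $\bigcap_{i \leq k}\HS_1^{(i)}$ equals $\bigcap_{i \leq k+1}\HS_1^{(i)}$: the inclusion $\supseteq$ holds because that intersection reduces $T_{k+1}$ and carries $T_{k+1}$ in $\mathcal{C}_{1, r}$, while $\subseteq$ follows from the maximality clause of Theorem \ref{thm_canon} applied to $T_{k+1}$. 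Feeding this through the iteration gives $\HS_1 = \bigcap_{i=1}^d \HS_1^{(i)}$, whence (4): any joint reducing subspace $\mathcal{L}$ on which all $T_i$ lie in $\mathcal{C}_{1, r}$ is contained in each $\HS_1^{(i)}$ by Theorem \ref{thm_canon}, hence in $\HS_1$. For (5) and uniqueness I would use the explicit description of $\HS_1^{(i)}$ from Theorem \ref{thm_canon} as an intersection of kernels of operators polynomial in $T_i, T_i^*, T_i^{-1}, T_i^{-*}$: any subspace reducing $T_i$ commutes with the orthogonal projection onto $\HS_1^{(i)}$, so the family $\{P_{\HS_1^{(i)}}\}_{i=1}^d$ commutes, $\HS$ splits simultaneously, and tracking the iteration shows $\HS_{2^d} = \bigcap_i \HS_2^{(i)}$. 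If $\mathcal{L}$ is a joint reducing subspace on which every $T_i$ is c.n.u., then $\mathcal{L} \cap \HS_1^{(i)}$ reduces $T_i|_\mathcal{L}$ with $T_i$ in $\mathcal{C}_{1, r}$ on it, forcing $\mathcal{L} \cap \HS_1^{(i)} = \{0\}$ and, by the commuting-projection compatibility, $\mathcal{L} \subseteq \HS_2^{(i)}$ for all $i$; thus $\mathcal{L} \subseteq \HS_{2^d}$, giving (5). Running these two maximality arguments coordinatewise shows that any decomposition $\{\mathcal{K}_\omega\}_{\omega \in \Omega_d}$ satisfying (1)--(3) obeys $\mathcal{K}_\omega \subseteq \bigcap_{\omega(i)=t_1}\HS_1^{(i)} \cap \bigcap_{\omega(i)=t_2}\HS_2^{(i)} = \HS_\omega$ for every $\omega$, and since both are orthogonal direct-sum decompositions of $\HS$, equality holds. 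The main obstacle is precisely this last paragraph: Theorem \ref{thm_canon} and Lemma \ref{lem_reduce} used as black boxes yield existence easily, but pinning down that the iteration's c.n.u. piece is exactly $\bigcap_i \HS_2^{(i)}$ --- and hence obtaining uniqueness and the maximality asserted in (5) --- requires the internal structure of the canonical decomposition, namely that its defining projections are spectral projections of operators built from $T_i$ and therefore commute across different $i$.
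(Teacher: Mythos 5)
Your proposal is correct, and its existence part (items (1)--(3)) follows essentially the same route as the paper: iterate Theorem \ref{thm_canon} one operator at a time, using Lemma \ref{lem_reduce} at each stage to see that the two new summands remain joint reducing subspaces, and induct on $d$. Two points where you go beyond the paper's write-up are worth noting. First, you make explicit the persistence facts (a reducing-subspace restriction of an operator in $\mathcal{C}_{1,r}$ stays in $\mathcal{C}_{1,r}$, and a reducing-subspace restriction of a c.n.u. $\Q$-contraction stays c.n.u.), which the paper uses silently when it asserts that the types of $T_1,\dotsc,T_{k-1}$ survive the later splittings. Second, the paper's proof stops after establishing (1)--(3) and leaves the maximality statements (4)--(5) and the uniqueness claim to the reader, whereas you actually prove them: your identification $\HS_\omega=\bigcap_{\omega(i)=t_1}\HS_1^{(i)}\cap\bigcap_{\omega(i)=t_2}\HS_2^{(i)}$, justified by the kernel description $\HS_1^{(i)}=\bigcap\ker\Delta_{k,nm}(T_i)$ from the proof of Lemma \ref{lem_reduce} (which makes $P_{\HS_1^{(i)}}$ commute with the projection onto any subspace reducing $T_i$), is exactly the right mechanism, and the squeeze argument $\mathcal{K}_\omega\subseteq\HS_\omega$ for two orthogonal decompositions correctly yields uniqueness. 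So your version is, if anything, more complete than the one in the paper; I see no gap.
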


\begin{proof}
	To make the algorithm clear, we prove the decomposition result for $d=2$. Let $(T_1, T_2)$ be a doubly commuting pair of operators in $\Q$ acting on a Hilbert space $\HS$. It follows from Theorem \ref{thm_canon} that there exist orthogonal closed reducing subspaces $\HS_1$ and $\HS_2$ for $T_1$ in $\HS$ such that $\HS=\HS_1 \oplus \HS_2$. Moreover, $T_1|_{\HS_1}$ is in $\mathcal{C}_{1, r}$ and $T_1|_{\HS_2}$ is a c.n.u. $\Q$-contraction. It follows from Lemma \ref{lem_reduce} that $\HS_1$ and $\HS_2$ also reduce $T_2$. Since $T_2|_{\HS_1}$ is also in $\Q$, it follows from Theorem \ref{thm_canon} that $\HS_1=\HS_{11} \oplus \HS_{12}$ where $\HS_{11}, \HS_{12}$ are orthogonal reducing subspaces for $T_2$. Also, $T_2|_{\HS_{11}}$ is in $\mathcal{C}_{1, r}$ and $T_2|_{\HS_{12}}$ is a c.n.u. $\Q$-contraction. Similarly, we obtain that $\HS_2=\HS_{21} \oplus \HS_{22}$ and $\HS_{21}, \HS_{22}$ reduce $T_2$. Moreover, $T_2|_{\HS_{21}} \in \mathcal{C}_{1, r}$ and  $T_2|_{\HS_{22}}$ is a c.n.u. $\Q$-contraction. We have by Lemma \ref{lem_reduce} that all the subspaces $\HS_{11}, \HS_{12}, \HS_{21}, \HS_{22}$ reduce both $T_1$ and $T_2$. Hence, $\HS=\HS_{11} \oplus \HS_{12} \oplus \HS_{21} \oplus \HS_{22}$ is an orthogonal decomposition into closed reducing subspaces for $T_1$ and $T_2$ such that the following holds:
	
	\begin{enumerate}
		\item $T_1|_{\HS_{11}}, T_2|_{\HS_{11}}$ are in $\mathcal{C}_{1, r}$;
		\item $T_1|_{\HS_{12}} \in \mathcal{C}_{1, r}$ and $T_2|_{\HS_{12}}$ is a c.n.u. $\Q$-contraction;
		\item $T_1|_{\HS_{21}}$ is a c.n.u. $\Q$-contraction and $T_2|_{\HS_{21}} \in \mathcal{C}_{1, r}$;
		\item $T_1|_{\HS_{22}}, T_2|_{\HS_{22}}$ are c.n.u. $\Q$-contractions.
	\end{enumerate}
	
	Thus, the desired conclusion follows for $d=2$. The general case follows from the principle of mathematical induction and Lemma \ref{lem_reduce}. Indeed, let the desired conclusion hold for $d=k$ with $k \geq 2$. We show that the desired decomposition result holds for any doubly commuting $(k+1)$-tuple $(T_1, \dotsc, T_k, T_{k+1})$ of operators in $\Q$ acting on a Hilbert space $\HS$. Since $T_{k+1} \in \Q$ and it doubly commutes with $T_1, \dotsc, T_k$,  a repeated application of Lemma \ref{lem_reduce} yields that  each subspace $\mathcal{H}_{j}$ reduces $T_{k+1}$ for $1 \leq j \leq 2^{k}$. Consider the collection of operator tuples 
	\[
	\Lambda=		\left\{(T_1|_{\mathcal{H}_j}, \dotsc, T_k|_{\mathcal{H}_j}, \; T_{k+1}|_{\mathcal{H}_j}) \ : \  1 \leq j \leq 2^k \right\}
	\]
	and let
	$
	(T_1|_{\mathcal{H}_j}, \dotsc, T_k|_{\mathcal{H}_j}, \; T_{k+1}|_{\mathcal{H}_j})
	$ 
	be an arbitrary tuple from this collection. By induction hypothesis, each $T_i|_{\mathcal{H}_j} (1 \leq i \leq k)$ is either in $\mathcal{C}_{1, r}$ or a c.n.u. $\Q$-contraction. We apply Theorem \ref{thm_canon} on  each $T_{k+1}|_{\mathcal{H}_j}$ and so, $\mathcal{H}_j$ can be written as an orthogonal sum of two reducing subspaces of $T_{k+1}$, say $\mathcal{H}_{j1}, \mathcal{H}_{j2}$, such that $T_{k+1}|_{\mathcal{H}_{j1}} \in \mathcal{C}_{1, r}$ and $T_{k+1}|_{\mathcal{H}_{j2}}$ is a c.n.u. $\Q$-contraction. Again, by Lemma \ref{lem_reduce}, $\mathcal{H}_{j1}, \mathcal{H}_{j2}$ reduce each of $T_1|_{\mathcal{H}_{j}}, \dotsc, T_k|_{\mathcal{H}_{j}}$. Hence, each tuple 
	$
	(T_1|_{\mathcal{H}_j}, \dotsc, T_k|_{\mathcal{H}_j},  T_{k+1}|_{\mathcal{H}_j})
	$
	orthogonally decomposes into two parts. Continuing this way, for every tuple in the collection $\Lambda$, we get $2^{k+1}$ orthogonal subspaces and the rest follows from the induction hypothesis.  
\end{proof}

\begin{rem} We have by Lemma \ref{lem_basic} that an operator $T \in QA_r$ if and only if $rT \in C_{1, r^2}$ and so, an analogue of the decomposition result in Theorem \ref{thm_dc_C1r} holds for doubly commuting tuple of operators in $QA_r$ as well. To do so, one needs to replace the class $\mathcal{C}_{1,r}$ by $\mathcal{Q}A_r$ and c.n.u. $C_{1, r}$-contractions by c.n.u. $QA_r$-contractions in the statement of Theorem \ref{thm_dc_C1r}. 
\end{rem} 

\vspace{0.1cm}

An operator $T$ is said to be an \textit{$\A$-contraction} if $\overline{\mathbb{A}}_r$ is a spectral set for $T$. Evidently, an $\A$-contraction is in $\Q$. In fact, any operator of the form $T=UB \in C_{1,r}$, where $U$ is a unitary and $B$ is an $\A$-contraction. It was proved in \cite{N-S1} that any operator in $\Q$ class is of this form. We extend this result to a doubly commuting tuple of operators in $\Q$ class providing a characterization of finitely many doubly commuting operators in $C_{1, r}$ class in terms of $\A$-contractions.

\begin{thm}\label{thm_dc_decomp}
	Let $\underline{T}=(T_1, \dotsc, T_d)$ be a tuple of  invertible operators acting on a Hilbert space $\HS$. Then $\underline{T}$ is a doubly commuting tuple of operators in $\Q$  if and only if there exist a tuple $\underline{U}=(U_1, \dotsc, U_d)$ of commuting unitaries and a tuple $\underline{D}=(D_1, \dotsc, D_d)$ of commuting self-adjoint $\A$-contractions on $\HS$ such that $D_iU_j=U_jD_i$ and $T_j=U_jD_j$ for $1 \leq i,j \leq d$ with $i \ne j$.
\end{thm}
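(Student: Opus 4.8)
The plan is to obtain $\underline{U}$ and $\underline{D}$ from the simultaneous polar decompositions of $T_1,\dots,T_d$, in the same way as in the proof of Theorem~\ref{lem_main}, and then to deduce the spectral-set condition directly from membership in $\Q$.

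\textbf{Necessity.} Assuming $\underline{T}$ is doubly commuting with each $T_j\in\Q$, I would set $D_j=(T_j^*T_j)^{1/2}$ and $U_j=T_jD_j^{-1}$; invertibility of $T_j$ makes $D_j$ a positive invertible operator and $U_j$ a unitary, with $T_j=U_jD_j$. Repeating the derivation of \eqref{eqn_01}, double commutativity gives $D_i^2D_j^2=D_j^2D_i^2$ and $T_iD_j^2=D_j^2T_i$ for $i\ne j$, hence by the spectral theorem $D_iD_j=D_jD_i$ and $T_iD_j=D_jT_i$, and then cancelling the invertible $D_i$ yields $U_iD_j=D_jU_i$ and $U_iU_j=U_jU_i$ for $i\ne j$; in particular $D_iU_j=U_jD_i$. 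Thus $\underline{U}$ is a tuple of commuting unitaries and $\underline{D}$ a tuple of commuting self-adjoint operators satisfying the mixed relation. To finish I would check that each $D_j$ is an $\A$-contraction: since $U_j$ is unitary, $\|D_j\|=\|T_j\|\le1$ and $\|D_j^{-1}\|=\|T_j^{-1}\|\le r^{-1}$ (as $\|rT_j^{-1}\|\le1$), so positivity of $D_j$ forces $\sigma(D_j)\subseteq[r,1]\subseteq\overline{\mathbb A}_r$; as $D_j$ is normal, the functional calculus gives $\|f(D_j)\|=\sup_{\sigma(D_j)}|f|\le\|f\|_{\infty,\overline{\mathbb A}_r}$ for every rational function $f$ with poles off $\overline{\mathbb A}_r$, i.e.\ $\overline{\mathbb A}_r$ is a spectral set for $D_j$.

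\textbf{Sufficiency.} Conversely, given $T_j=U_jD_j$ with $\underline{U}$ commuting unitaries, $\underline{D}$ commuting self-adjoint $\A$-contractions, and $D_iU_j=U_jD_i$ for $i\ne j$, I would first note that $\overline{\mathbb A}_r$ being a spectral set for $D_j$ gives $D_j\in\Q$, so $\|D_j\|\le1$ and $\|rD_j^{-1}\|\le1$; since $U_j$ is unitary this forces $\|T_j\|=\|D_j\|\le1$ and $\|rT_j^{-1}\|=\|rD_j^{-1}U_j^*\|=\|rD_j^{-1}\|\le1$, so $T_j\in\Q$. Double commutativity I would verify by a direct computation: for $i\ne j$, using $D_iU_j=U_jD_i$, $U_iU_j=U_jU_i$ and $D_iD_j=D_jD_i$ one moves the factors of $T_iT_j=U_iD_iU_jD_j$ past one another to reach $U_jU_iD_jD_i=T_jT_i$; and taking adjoints of the hypotheses gives $U_iU_j^*=U_j^*U_i$ and $D_iU_j^*=U_j^*D_i$, which together with $D_j^*=D_j$ allow $T_iT_j^*=U_iD_iD_jU_j^*$ to be rewritten as $D_jU_j^*U_iD_i=T_j^*T_i$.

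\textbf{Expected difficulty.} There is no deep obstacle: for $d=1$ the statement is the factorization of an operator in $\Q$ established in \cite{N-S1}, and for general $d$ it is that single-variable result applied coordinatewise. The only point needing care — and the one I expect to occupy most of the write-up — is the bookkeeping of the mixed relation $D_iU_j=U_jD_i$ and its adjoint, verifying that precisely the commutation relations produced in the necessity half are the ones needed to recover double commutativity in the sufficiency half.
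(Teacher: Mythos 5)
Your proposal is correct and follows essentially the same route as the paper: polar decompositions $T_j=U_jD_j$, the spectral-theorem upgrade from $D_i^2D_j^2=D_j^2D_i^2$ and $T_iD_j^2=D_j^2T_i$ to the square-root relations, the observation that a positive invertible operator with $\sigma(D_j)\subseteq[r,1]$ is automatically an $\mathbb{A}_r$-contraction by normality, and the same norm estimates plus factor-shuffling for the converse. No substantive differences.
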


\begin{proof}
	Let $\underline{T}$ be a doubly commuting tuple of operators in $\Q$. We have the polar decomposition 
	$
	T_j=U_j(T_j^*T_j)^{1\slash 2}$ for $j=1, \dotsc, d
	$,
	where each $U_j=T_j(T_j^*T_j)^{-1\slash 2}$ is a unitary. Let $D_j=(T_j^*T_j)^{1\slash 2}$ and choose $i, j \in \{1, \dotsc, d \}$ with $i \ne j$. Using the doubly commutativity hypothesis, it is easy to see that $
		D_i^2D_j^2=D_j^2D_i^2$ and $T_iD_j^2=D_j^2T_i.
$ We have by spectral theorem that $D_iD_j=D_jD_i$ and $T_iD_j=D_jT_i$. Hence, $U_iU_j=T_jD_j^{-1}T_iD_i^{-1}=T_iD_i^{-1}T_jD_j^{-1}=U_jU_i$ and $U_iD_j=T_iD_i^{-1}D_j=D_jT_iD_i^{-1}$. Putting everything together, we have that $\underline{U}$ and $\underline{D}$ consist of commuting operators and $U_iD_j=D_jU_i$ for $1 \leq i, j \leq d$ with $i \ne j$. Since each $D_j$ is a self-adjoint operator in $\Q$, it follows that $D_j$ is an $\A$-contraction, because $\sigma(D_j) \subseteq \overline{\mathbb{A}}_r$ is a spectral set for each $D_j$. The latter follows from the fact that any compact set containing $\sigma(N)$ is a spectral set for a normal operator $N$.
	
	\medskip 
	
	Conversely, assume that there exist a tuple $\underline{U}=(U_1, \dotsc, U_d)$ of commuting unitaries and a tuple $\underline{D}=(D_1, \dotsc, D_d)$ of commuting self-adjoint $\A$-contractions on $\HS$ such that $D_iU_j=U_jD_i$ and $T_j=U_jD_j$ for $1 \leq i, j \leq d$ with $i \ne j$. Then $\|T_j\| \leq \|D_j\| \leq 1$ and $\|rT_j^{-1}\| \leq \|rD_j^{-1}\| \leq 1$ for $1 \leq j \leq d$. Moreover, we have
	$
	T_iT_j=U_iD_iU_jD_j=U_jD_jU_iD_i=T_jT_i$ and $T_iT_j^*=U_iD_iD_jU_j^*=D_jU_iD_iU_j^*=D_jU_j^*U_iD_i=T_j^*T_i$
	for all $1 \leq i, j \leq d$ with $i \ne j$. The proof is now complete.
\end{proof}

Needless to mention, we have the following analog of Theorem \ref{thm_dc_decomp} for $QA_r$ class. The proof follows directly from Lemma \ref{lem_basic} and Theorem \ref{thm_dc_decomp}. We skip the proof.

\begin{thm}\label{thm_dc_decompII}
	Let $\underline{T}=(T_1, \dotsc, T_d)$ be a tuple of  invertible operators acting on a Hilbert space $\HS$. Then $\underline{T}$ is a doubly commuting tuple of operators in $QA_r$  if and only if there exist a tuple $\underline{U}=(U_1, \dotsc, U_d)$ of commuting unitaries and a tuple $\underline{D}=(D_1, \dotsc, D_d)$ of commuting self-adjoint operators on $\HS$ such that $\sigma(D_j) \subseteq \overline{A}_r, \ D_iU_j=U_jD_i$ and $T_j=U_jD_j$ for $1 \leq i,j \leq d$ with $i \ne j$.
\end{thm}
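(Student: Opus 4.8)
The plan is to reduce Theorem~\ref{thm_dc_decompII} to Theorem~\ref{thm_dc_decomp} via the substitution supplied by Lemma~\ref{lem_basic}, namely that $T \in QA_r$ if and only if $rT \in C_{1,r^2}$. First I would observe that the passage $T_j \mapsto A_j := rT_j$ is a bijection on tuples of invertible operators that preserves both commuting and doubly commuting relations, since scalar multiples do not affect the identities $A_iA_j = A_jA_i$ and $A_iA_j^* = A_j^*A_i$. Hence $\underline T = (T_1,\dotsc,T_d)$ is a doubly commuting tuple in $QA_r$ precisely when $\underline A = (rT_1,\dotsc,rT_d)$ is a doubly commuting tuple in $C_{1,r^2}$.

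Next I would apply Theorem~\ref{thm_dc_decomp} (with $r$ replaced by $r^2$) to $\underline A$: this yields commuting unitaries $U_1,\dotsc,U_d$ and commuting self-adjoint $\mathbb A_{r^2}$-contractions $B_1,\dotsc,B_d$ with $B_iU_j = U_jB_i$ for $i\ne j$ and $A_j = U_jB_j$. Setting $D_j := r^{-1}B_j$, each $D_j$ is self-adjoint, the relations $D_iD_j = D_jD_i$, $D_iU_j = U_jD_i$ carry over verbatim (again scalars are harmless), and $T_j = r^{-1}A_j = r^{-1}U_jB_j = U_j D_j$ with the same unitary $U_j$. The only point needing a brief spectral-mapping remark is the localization of the spectrum: $B_j$ being an $\mathbb A_{r^2}$-contraction means $\sigma(B_j) \subseteq \overline{\mathbb A}_{r^2} = \{z : r^2 \le |z| \le 1\}$ (for a self-adjoint operator with positive-definite square, $\overline{\mathbb A}_{r^2}$ is indeed a spectral set exactly when the spectrum sits inside it, as noted in the proof of Theorem~\ref{thm_dc_decomp}); hence $\sigma(D_j) = r^{-1}\sigma(B_j) \subseteq \{z : r \le |z| \le r^{-1}\} = \overline{A}_r$, which is the required containment in the statement.

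For the converse, given commuting unitaries $U_j$ and commuting self-adjoint $D_j$ with $\sigma(D_j)\subseteq \overline A_r$, $D_iU_j = U_jD_i$ for $i\ne j$, and $T_j = U_jD_j$, I would set $B_j := rD_j$, note $\sigma(B_j) \subseteq \overline{\mathbb A}_{r^2}$ so that $B_j$ is an $\mathbb A_{r^2}$-contraction, and invoke the converse direction of Theorem~\ref{thm_dc_decomp} to conclude $(rT_1,\dotsc,rT_d) = (U_1B_1,\dotsc,U_dB_d)$ is a doubly commuting tuple in $C_{1,r^2}$; then Lemma~\ref{lem_basic} gives that $\underline T$ is a doubly commuting tuple in $QA_r$. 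There is essentially no obstacle here: the content is entirely in Theorem~\ref{thm_dc_decomp}, and the main (minor) thing to be careful about is bookkeeping the dilation/rescaling of the annulus radius $r \leftrightarrow r^2$ and the corresponding spectral-set condition $\overline{\mathbb A}_{r^2} \leftrightarrow \overline A_r$, which is why the paper chooses to skip writing it out in full.
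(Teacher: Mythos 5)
Your proposal is correct and is exactly the route the paper intends: the paper explicitly states that the proof ``follows directly from Lemma~\ref{lem_basic} and Theorem~\ref{thm_dc_decomp}'' and omits the details, which are precisely the rescaling $T_j \leftrightarrow rT_j$, $D_j \leftrightarrow rD_j$ and the spectral bookkeeping $\overline{\mathbb{A}}_{r^2} \leftrightarrow \overline{A}_r$ that you carry out. Your side remark that, for a self-adjoint operator, $\overline{\mathbb{A}}_{r^2}$ is a spectral set if and only if the spectrum lies in it is the correct justification and matches the normality observation made at the end of the paper's proof of Theorem~\ref{thm_dc_decomp}.
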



\begin{thebibliography}{9}
		
		\vspace{0.4cm}
		
		
		\bibitem{Agler}
		J. Agler, \textit{Rational dilation on an annulus,} Ann. of Math., 121 (1985),  537 -- 563. \medskip
		
		
		\bibitem{AglerII}
		J. Agler, \textit{The Arveson extension theorem and coanalytic models}, Integral Equations and Operator Theory, 5 (1982), 608 -- 631. \medskip		
		
		
		\bibitem{Arveson}
		W. B. Arveson, \textit{Subalgebras of C$^*$-algebras}, Acta Math., 123 (1969),  141 -- 224. \medskip
		
		\bibitem{Ball}
		J. A. Ball, \textit{A lifting theorem for operators models of finite rank on multiply connected domains}, J. Operator theory, 1 (1979), 3 -- 25. \medskip
		
		
			\bibitem{Dmitry}
		G. Bello and D. Yakubovich, \textit{An operator model in the annulus}, J. Operator Theory, 90 (2023), 25 -- 40. \medskip
		
		
		\bibitem{Conway_OT}
		J. B. Conway, \textit{A Course in Operator Theory}, American Mathematical Society (2000). \medskip 
		
	
				
			\bibitem{Douglas}
		R. G. Douglas and V. I. Paulsen, \textit{Completely bounded maps and hypo-Dirichlet algebras},  Acta Sci. Math. (Szeged), 50 (1986), 143 -- 157. \medskip 		
				
		\bibitem{Nagy}
		C. Foias and B. Sz.-Nagy, \textit{Sur les contractions de l’espace de Hilbert IV}, Acta Sci. Math., 21 (1960), 251 -- 259. \medskip		
		
			\bibitem{Fuglede}
		B. Fuglede, \textit{A commutativity theorem for normal operators}, Proc. Nat. Acad. Sci., 36 (1950), 36 -- 40. \medskip 
		
		\bibitem{Furuta}
		T. Furuta, \textit{On the polar decomposition of an operator}, Acta Sci. Math. (Szeged), 46 (1983), 261 -- 268. \medskip	
		
		\bibitem{McColloughIII}	 
		B. A. Mair and S. McCullough, \textit{Invariance of extreme harmonic functions on an annulus; applications to theta functions}, Houston J. Math., 20 (1994), 453 -- 473. \medskip
		
		\bibitem{McColloughII}	
		S. McCullough, \textit{Matrix functions of positive real part on an annulus}, Houston J. Math., 21 (1995), 489–506. \medskip
		
		\bibitem{Pas-McCull}
		S. McCullough and J. E. Pascoe, \textit{Geometric Dilations and Operator Annuli}, J. Funct. Anal., 285 (2023), Paper No. 110035, 20 pp. \medskip		
		
			\bibitem{Misra}
		G. Misra, \textit{Curvature inequalities and extremal properties of bundle shifts}, J. Operator Theory, 11 (1984), 305 -- 317. \medskip
		
		\bibitem{Mittal}
		M. Mittal, \textit{Function theory on the quantum annulus and other domains}, Thesis (Ph.D.)-University of Houston, ISBN: 978-1124-46385-8, 2010, 141 pp. \medskip		
		
		\bibitem{Murphy}
		G. J. Murphy, \textit{$C^*$-algebras and operator theory}, Academic press, 2014. \medskip
		
		\bibitem{NagyFoias}
		B. Sz.-Nagy, C. Foias, L. Kerchy and H. Bercovici, Harmonic analysis of operators on Hilbert space, \textit{Universitext Springer}, New York, 2010. \medskip		
		
		\bibitem{B-Nagy} Bela Sz.-Nagy, \textit{Sur les contractions de l’espace de Hilbert}, Acta Sci. Math., 15 (1953), 87 -- 92. \medskip
		
		\bibitem{Nelson}
		E. Nelson, \textit{The distinguished boundary of the unit operator ball}, Proc. Amer. Math. Soc., 12 (1961), 994 -- 995. \medskip
		
				\bibitem{N-S1}
		S. Pal and N. Tomar, \textit{Characterizations and models for the $C_{1, r}$ class and quantum annulus}, Canad. Math. Bull. 68 (2025), 818--833. \medskip
		
		
		\bibitem{Paulsen}
		V. Paulsen, \textit{Completely bounded maps and operator algebras}, Cambridge University Press, 2003. \medskip
		
		\bibitem{Sarason}
		D. Sarason, \textit{The $H^p$ spaces of an annulus}, Mem. Amer. Math. Soc., 56 (1965), 78 pp. \medskip
		
			\bibitem{Shields}
		A. L. Shields, \textit{Weighted shift operators and analytic function theory},
		Topics in operator theory, Math. Surveys, No. 13. Amer. Math. Soc.,
		Providence, RI, 1974, 49 -- 128. \medskip 
		
		
		\bibitem{Stine}
		W. F. Stinespring, \textit{Positive functions on $C^*$-algebras}, Proc. Amer. Math. Soc., 6 (1955), 211 -- 216. \medskip			
		
		\bibitem{Taylor}
		J. L. Taylor, \textit{The analytic-functional calculus for several commuting operators}, Acta Math., 125 (1970), 1 -- 38. \medskip
		
		
	\bibitem{TsikalasII}
	G. Tsikalas, \textit{A note on a spectral constant associated with an annulus},
	Oper. Matrices, 16 (2022), 95 -- 99.      \medskip
		
		
	\end{thebibliography}
\end{document}